\documentclass[10pt,fleqn]{article}
\usepackage[paperwidth=15.5cm,paperheight=23.5cm,
            left=1.5cm,right=1.5cm,top=2cm,bottom=2cm]{geometry}
\usepackage{amsmath,amssymb,amsthm}      
\usepackage{graphicx,graphics,subfigure,float,caption2}
\usepackage{booktabs}
\usepackage{times,cases,color}
\usepackage[dvipdfm,linkcolor=blue,pdfstartview=FitH,
            CJKbookmarks=true,bookmarksnumbered=true,bookmarksopen=true,
            colorlinks=true, 
            pdfborder=000,   
            citecolor=blue]{hyperref}


\newtheorem{theorem}{Theorem}

\newtheorem{remark}{Remark}
\newtheorem{example}{Example}
\newtheorem{proposition}{Proposition}

\numberwithin{equation}{section}
\graphicspath{{figure/}}
\begin{document}
\title{Polynomial Spectral collocation Method for Space Fractional Advection-Diffusion Equation}
\date{}
\author{WenYi Tian  \quad Weihua Deng \quad Yujiang Wu \\
\\
\itshape{School of Mathematics and Statistics, Lanzhou University, } \\
\itshape{Lanzhou 730000, People's Republic of China}}
\maketitle

\fontsize{9.7pt}{11.6pt}\selectfont
\begin{abstract}
This paper discusses the spectral collocation method for numerically
solving nonlocal problems: one dimensional space fractional
advection-diffusion equation; and two dimensional linear/nonlinear
space fractional advection-diffusion equation. The differentiation
matrixes of the left and right Riemann-Liouville and Caputo
fractional derivatives are derived for any collocation points within
any given interval. The stabilities of the one dimensional
semi-discrete and full-discrete schemes are theoretically
established. Several numerical examples
 with different boundary conditions are computed to testify the efficiency of the
numerical schemes and confirm the exponential convergence;  the
physical simulations for L\'evy-Feller advection-diffusion equation
are performed; and the eigenvalue distributions of the iterative
matrix for a variety of systems are displayed to illustrate the
stabilities of the numerical schemes in more general cases.

\textbf{Keywords}: Riemann-Liouville fractional derivative, Caputo
fractional derivative, spectral collocation method, differentiation
matrix, fractional advection-diffusion equation.
\end{abstract}
\section{Introduction}\label{sec:1}
During the last decades, it was pointed out that the fractional calculus is more suitable to describe the memory and hereditary properties of materials and processes than the classical one, and hundreds of engineers and scientists are enrolled in the investigation of the fractional calculus and fractional differential equations due to their vast potential of applications in various practical fields, especially, for characterizing anomalous dynamics. In fact, these applications cross diverse disciplines, including physics, chemistry, biology, polymer, mechanical engineering, signal processing, systems identification, control theory, finance, etc \cite{Metzler:00a,Metzler:04,Podlubny:02}.

The space fractional derivative with order $\alpha \in (1,2)$ can
characterize superdiffusion of anomalous diffusion. The anomalous
diffusion has the non-linear growth of the mean squared displacement
in the course of time with the power-law form  $\langle
x^2(t)\rangle\sim \kappa_\alpha t^\alpha$, where  $\kappa_\alpha$ is
the diffusion coefficient; and the superdiffusion corresponds to
$\alpha \in (1,2)$ being exactly the order of space fractional
derivative \cite{Metzler:00a}. The space fractional
advection-diffusion equation can effectively describe superdiffusion
with advection motion. This paper discusses the polynomial spectral
collocation method for the following space fractional
advection-diffusion equation
\begin{equation}\label{eq:0.1}
  \frac{\partial u(\mathbf{x},t)}{\partial t} =\kappa_\alpha\sum_{s=1}^{d}\nabla^{\alpha}_{x_{s}}u^m(\mathbf{x},t)
  -\pmb{\nu_{\alpha}} \cdot \nabla u(\mathbf{x},t)+f(\mathbf{x},t),\quad (\mathbf{x},t)\in\Omega\times(0,T],
\end{equation}
where $\kappa_\alpha>0$ is the generalized diffusivity,
$\pmb{\nu_{\alpha}}$ is the velocity vector and all its components
are positive, ~$d=1$ and $2$, respectively, corresponds to the one
and two dimensional problems we will discuss, $m$ is a nature number
and $m=1$ corresponds to the linear case, and $u(\mathbf{x},t)$ is
the unknown concentration function on the bounded domain
$\Omega=(-1,1)^{d}$, $f(\mathbf{x},t)$ denotes a source, $\nabla$ is
the usual gradient operator,
 and
\begin{equation}\label{eq:0.11}
  \nabla^{\alpha}_{x_{s}}=\big\{p~_{-1}D_{x_{s}}^{\alpha}+q~_{x_s}D_1^{\alpha}\big\},
\end{equation}
with $p,q\ge0,p+q=1$, and $_{-1}D_{x_{s}}^{\alpha}$ and
$_{x_{s}}D_{1}^{\alpha}$ are left and right Riemann-Liouville
fractional partial derivatives of order $\alpha~(1<\alpha<2)$ with
respect to the $s$-th component $x_s$ of $\mathbf{x}$, respectively,
being defined by
\begin{equation}
  \begin{split}\label{eq:0.2}
    & _{-1}D_{x_{s}}^{\alpha}u(\textbf{x},t)
      =\frac{1}{\Gamma(2-\alpha)}\frac{\mathrm{d}^2}{\mathrm{d}{x_{s}}^2}
      \int^{x_{s}}_{-1}\frac{u(\mathbf{x}_\xi,t)}{({x_{s}}-\xi)^{\alpha-1}}\mathrm{d}\xi,\\
    & _{x_{s}}D_1^{\alpha}u(\textbf{x},t)
      =\frac{1}{\Gamma(2-\alpha)}\frac{\mathrm{d}^2}{\mathrm{d}{x_{s}}^2}
      \int_{x_{s}}^1\frac{u(\mathbf{x}_\xi,t)}{(\xi-x_s)^{\alpha-1}}\mathrm{d}\xi,
  \end{split}
\end{equation}
and $\mathbf{x}_\xi$ represents $\mathbf{x}$ with its $s$-th
component replaced by $\xi$.

The space fractional advection-diffusion equation (\ref{eq:0.1})
arises naturally in many practical problems, it can describe the
probability distribution of the particles having advection and
superdiffusion, such as, the nonlocal flows and non-Fickian flows in
the porous media \cite{Pablo:11}. There are some models which are
the special cases of (\ref{eq:0.1}). Chaves presents a one
dimensional fractional advection-diffusion equation \cite{Chaves:98}
(i.e., $d=1$ in (\ref{eq:0.1})) used to investigate the mechanism of
the superdiffusion. A governing equation of stable random walks is
developed in \cite{Benson:00}. Ervin and Roop derive the steady
state fractional advection dispersion equation in $\mathbb{R}^d$ via
a continuous time random walk (CTRW) model \cite{Ervin:07a}. For
more applications of (\ref{eq:0.1}) one can see the recent reviews
\cite{Metzler:00a,Zaslavsky:02,Metzler:04} and references therein.

Several analytical methodologies, such as, Laplace transform, Mellin
transform, Fourier transform, are restored to obtain the analytical
solutions of the fractional equations by many authors
\cite{Podlubny:99,Metzler:00a,Zaslavsky:02,Metzler:04}. Similar to
the classical differential equations, they mainly work for linear
fractional differential equations with constant coefficients, but be
invalid for the equations with variable coefficients and nonlinear
equations; and even more the solutions obtained by analytical
methods are usually transcendental functions or infinite series because
of the nonlocal properties of fractional operators. So, in many
cases the more reasonable option is to find the numerical solutions.
During the recent years, much more efforts have been devoted to the
numerical investigations of fractional integral and differential
equations, such as, the finite difference method
\cite{Deng:11,Liu:07,Meerschaert:03,Meerschaert:06}, finite element method
\cite{Deng:08,Ervin:06,Ervin:07a,Ervin:07,Zhang:10}, and spectral Galerkin
method \cite{X.J.Li:09,X.J.Li:10}, etc.

It is well known that the challenge of solving fractional
differential equations essentially comes from the nonlocal
properties of  fractional derivatives. As a `global' numerical
method, spectral method seems to be a nature choice for obtaining
high order numerical schemes of solving fractional differential
equations. Spectral collocation method has its special advantage for
solving nonlinear problems, the aim of this paper is to design the
polynomial spectral collocation algorithm for solving space
fractional advection-diffusion equation (\ref{eq:0.1}) in one and
two dimensions.

The rest of the paper is organized as follows. In Section
\ref{sec:2} we derive the differentiation matrixes for
Riemann-Liouville and Caputo fractional derivatives of order
$\alpha~(n-1<\alpha<n)$ within some given interval. Section
\ref{sec:3} proposes the semi-discrete and full-discrete schemes for
the space fractional advection-diffusion equation in one dimension
and discusses their numerical stabilities. Section \ref{sec:4}
presents the numerical schemes for the linear and nonlinear
fractional advection-diffusion equations in two dimensions, and the
extensive numerical results are displayed in Section \ref{sec:5} to
confirm the effectiveness of the spectral collocation method for
solving space fractional advection-diffusion equations.
\section{Preliminaries}\label{sec:2}

This section focuses on deriving the differentiation matrixes for
fractional derivatives, and making some remarks and presenting the
propositions of the derived matrixes. This is the first and foremost
job we need to do for applying spectral collocation method to solve
fractional problems. The definitions of fractional derivatives and integral of order $\alpha(n-1<\alpha<n)$ are
as follows \cite{Podlubny:99}:
\begin{itemize}
  \item left Riemann-Liouville fractional derivative:
    \begin{equation*}
      _aD_x^{\alpha}u(x)=\frac{1}{\Gamma(n-\alpha)}\frac{\mathrm{d}^n}{\mathrm{d}
          x^n}\int_a^x\frac{u(\xi)}{(x-\xi)^{\alpha-n+1}}\mathrm{d}\xi
    \end{equation*}
  \item right Riemann-Liouville fractional derivative:
    \begin{equation*}
      _xD_b^{\alpha}u(x)=\frac{(-1)^n}{\Gamma(n-\alpha)}\frac{\mathrm{d}^n}{\mathrm{d}
          x^n}\int_x^b\frac{u(\xi)}{(\xi-x)^{\alpha-n+1}}\mathrm{d}\xi
    \end{equation*}
  \item left Caputo fractional derivative:
    \begin{equation*}
      _a^CD_x^{\alpha}u(x)=\frac{1}{\Gamma(n-\alpha)}\int_a^x
      \frac{u^{(n)}(\xi)}{(x-\xi)^{\alpha-n+1}}\mathrm{d}\xi
    \end{equation*}
  \item right Caputo fractional derivative:
    \begin{equation*}
      _x^CD_b^{\alpha}u(x)=\frac{(-1)^n}{\Gamma(n-\alpha)}
      \int_x^b\frac{u^{(n)}(\xi)}{(\xi-x)^{\alpha-n+1}}\mathrm{d}\xi
    \end{equation*}
  \item left fractional integral:
    \begin{equation*}
      _aD_x^{-\alpha}u(x)=\frac{1}{\Gamma(\alpha)}
      \int_a^x\frac{u(\xi)}{(x-\xi)^{1-\alpha}}\mathrm{d}\xi
    \end{equation*}
  \item right fractional integral:
    \begin{equation*}
      _xD_b^{-\alpha}u(x)=\frac{1}{\Gamma(\alpha)}
      \int_x^b\frac{u(\xi)}{(\xi-x)^{1-\alpha}}\mathrm{d}\xi.
    \end{equation*}
\end{itemize}
From the definitions of Riemann-Liouville derivatives, it is easily
concluded that
\begin{equation}\label{eq:1.10}
  \begin{split}
    & _aD_x^{\alpha}[(x-a)^{\gamma}]=\frac{\Gamma(\gamma+1)}{\Gamma(\gamma+1-\alpha)}
      (x-a)^{\gamma-\alpha},~ \gamma>-1,\\
    & _xD_b^{\alpha}[(x-b)^{\gamma}]=\frac{(-1)^{\gamma}\Gamma(\gamma+1)}{\Gamma
      (\gamma+1-\alpha)}(b-x)^{\gamma-\alpha},~ \gamma>-1.
  \end{split}
\end{equation}
For the left Caputo derivative of $(x-a)^{\gamma}$ and right Caputo
derivative of $(x-b)^{\gamma}$, we have the same formulae as
(\ref{eq:1.10}) except for $\gamma=0,\ldots,n-1$, and both of them
are zeros when $\gamma$ taking these values.


\subsection{Ways to Evaluate Fractional Derivatives of Functions Approximated by Polynomials}
There are two ways to evaluate the fractional derivatives of some
function $u_{_N}$, being the polynomial approximation of $u$: one is
to
 expand $u$ by some special orthogonal polynomials first and then use
the formulae of fractional derivatives of some particular orthogonal
polynomials; another one is to do the Lagrange interpolation for $u$
first and then derive the differentiation matrix based on the
expanded interpolation function.

For the first way, we can apply the following formulae for
Riemann-Liouville fractional derivatives \cite{Askey:75}
\begin{equation}
  \begin{split}
    & (1+x)^{\delta+\alpha}\frac{P_n^{(\gamma-\alpha,\delta+\alpha)}(x)}{P_n^{(\delta+\alpha,\gamma-\alpha)}(1)}
      =\frac{\Gamma(\delta+\alpha+1)}{\Gamma(\delta+1)\Gamma(\alpha)}\int_{-1}^x(1+\xi)^{\delta}
      \frac{P_n^{(\gamma,\delta)}(\xi)}{P_n^{(\delta,\gamma)}(1)}(x-\xi)^{\alpha-1}\mathrm{d}\xi, \\
    & (1-x)^{\gamma+\alpha}\frac{P_n^{(\gamma+\alpha,\delta-\alpha)}(x)}{P_n^{(\gamma+\alpha,\delta-\alpha)}(1)}
      =\frac{\Gamma(\gamma+\alpha+1)}{\Gamma(\gamma+1)\Gamma(\alpha)}\int_x^1(1-\xi)^{\gamma}
      \frac{P_n^{(\gamma,\delta)}(\xi)}{P_n^{(\gamma,\delta)}(1)}(\xi-x)^{\alpha-1}\mathrm{d}\xi,
  \end{split}
\end{equation}
where $\gamma,\delta>-1,~\alpha>0,~-1<x<1$, and
$P_n^{(\gamma,\delta)}(x)$ is the Jacobi polynomial of degree $n$
with respect to the weight function $w(x)=(1-x)^\gamma(1+x)^\delta$.
Applying the properties of Riemann-Liouville fractional derivatives
${_aD_x^\alpha}{_aD_x^{-\alpha}}=I$ and
${_xD_b^\alpha}{_xD_b^{-\alpha}}=I$, the above formulae can be
rewritten as
\begin{equation}
  \begin{split}
    & {_{-1}}D_x^{\alpha}\Big((1+x)^{\delta+\alpha}P_n^{(\gamma-\alpha,\delta+\alpha)}(x)\Big)
      =\frac{\Gamma(n+\delta+\alpha+1)}{\Gamma(n+\delta+1)}(1+x)^{\delta}P_n^{(\gamma,\delta)}(x), \\
    & {_{x}}D_1^{\alpha}\Big((1-x)^{\gamma+\alpha}P_n^{(\gamma+\alpha,\delta-\alpha)}(x)\Big)
      =\frac{\Gamma(n+\gamma+\alpha+1)}{\Gamma(n+\gamma+1)}(1-x)^{\gamma}P_n^{(\gamma,\delta)}(x).
  \end{split}
\end{equation}
This means that the functions
$\{(1+x)^{\delta+\alpha}P_n^{(\gamma-\alpha,\delta+\alpha)}(x)\}$ or
$\{(1-x)^{\gamma+\alpha}P_n^{(\gamma+\alpha,\delta-\alpha)}(x)\}$
can be chosen as bases to expand the function $u$. Actually, these
basis functions are some form of the generalized Jacobi polynomials
given in \cite{Guo:09}, which form a complete orthogonal system in
some weighted $L^2([-1,1])$ space. If the fractional derivative in
(\ref{eq:0.1}) is one sided Riemann-Liouville fractional derivative,
namely, $q=0$ or $p=0$ in (\ref{eq:0.11}), these techniques work
very well; we can choose
$\{(1+x)^{\delta+\alpha}P_n^{(\gamma-\alpha,\delta+\alpha)}(x)\}$ as
basis functions to expand $u$ when $q=0$ and use
$\{(1-x)^{\gamma+\alpha}P_n^{(\gamma+\alpha,\delta-\alpha)}(x)\}$ as
bases when  $p=0$.

For the general form of (\ref{eq:0.1}), including one sided ($p\cdot
q=0$) and two sided cases ($p\cdot q \not=0$), we have to seek the
other way to derive the differentiation matrix. First we do the
Lagrange interpolation of $u(x,t)$ at
Legendre-Gauss-Lobatto/Chebyshev-Gauss-Lobatto points
$\{x_i:~i=0,\ldots,N\}$, and expand it as a power function of
$(x-a)$ ( or $(x-b)$ ); then using (\ref{eq:1.10}) leads to the
evaluation of the fractional derivatives of $u_{_N}$. The
Legendre-Gauss-Lobatto and Chebyshev-Gauss-Lobatto points are the
scaled (from $[-1,1]$ into $[a,b]$) roots of the polynomials
$(1-x^2)P_N^{'}(x)$ and $(1-x^2)T_N^{'}(x)$ respectively, where
$P_N(x)$ and $T_N(x)$ are the Legendre and Chebyshev polynomials of
degree $N$ respectively.
Generally, we need to use numerical methods to get the
Legendre-Gauss-Lobatto points; but for Chebyshev-Gauss-Lobatto
points, there exists the formula $x_i=-\cos(\frac{\pi
i}{N}),~i=0,\ldots,N$ in the interval $[-1,1]$.  The Lagrange
interpolation of $u(x,t)$ is
\begin{equation}\label{eq:1.13}
  u_{_N}(x,t)=\sum_{i=0}^Nu(x_i,t)l_i(x),\qquad l_i(x)=\prod_{j=0,j\neq
  i}^N\frac{x-x_j}{x_i-x_j},
\end{equation}
and the Lagrange polynomials can also be written as
\begin{equation}
  l_i(x)=\frac{Q(x)}{(x-x_i)Q^{'}(x_i)},
\end{equation}
where $Q(x)=(1-x^2)P_N^{'}(x)$ for Legendre-Gauss-Lobatto points or
$Q(x)=(1-x^2)T_N^{'}(x)$ for Chebyshev-Gauss-Lobatto points.
 In the next subsection, we present the details of
deriving the differentiation matrixes and discuss their properties.
\subsection{Differentiation Matrixes for Fractional Derivatives}
Although we choose Legendre-Gauss-Lobatto/Chebyshev-Gauss-Lobatto
points as the collocation points for designing the numerical
schemes. The differentiation matrixes will be derived on any
collocation points $\{x_i:~i=0,\ldots,N\}$ within the given interval
$[a,b]$ for the left and right Riemann-Liouville and Caputo
fractional derivatives of order $\alpha~(n-1<\alpha<n)$, where $N\ge
n$ is assumed. First we use the method given in \cite{Henrici:79} to
expand $l_i(x)$ as
\begin{equation}\label{eq:1.11}
  l_i(x)=\prod_{j=0,j\neq i}^N\frac{x-x_j}{x_i-x_j}=\sum_{k=0}^Nc_{i,k}(x-a)^{N-k}.
\end{equation}
The idea of the method is briefly described as follows. The nodal
polynomial can be expanded as
\begin{equation}
  p(z)=\prod_{i=1}^m(z-z_i)=z^m+c_1z^{m-1}+\cdots+c_m.
\end{equation}
Consider the reciprocal polynomial
\begin{equation}
    q(z)=z^mp(\frac{1}{z})=\prod\limits_{i=1}^m(1-zz_i)=1+c_1z+\cdots+c_mz^m,
\end{equation}
and
\begin{equation}
    q'(z)=\sum\limits_{i=1}^m(-z_i)\prod\limits_{j=1,j\neq i}^m(1-zz_j)=c_1+2c_2z+\cdots+mc_mz^{m-1},
\end{equation}
then
\begin{equation}
    \frac{q^{'}(z)}{q(z)}=\sum_{i=1}^m\frac{-z_i}{1-zz_i}=-\sum_{i=1}^mz_i\sum_{j=0}^{\infty}
    z_i^jz^j=-\sum_{k=1}^{\infty}\big(\sum_{i=1}^mz_i^k\big)z^{k-1}.
\end{equation}
Denoting $s_k=\sum\limits_{i=1}^mz_i^k$, then the coefficients $c_i$ can be determined by comparing the coefficients of the above equality, which obtains
\begin{equation*}
  c_0=1,~c_i=-\frac{1}{i}(s_ic_0+s_{i-1}c_1+\cdots+s_1c_{i-1}), \quad i=1,2,\ldots,m.
\end{equation*}

\begin{remark}
Due to the limits of the machine precision, the errors of the
coefficients in (\ref{eq:1.11}) calculated by computer may explode
with $N$ being large. For $N=15,20,25,30,35$, the max errors of
the coefficients with respect to the Legendre-Gauss-Lobatto points
are $1.30\times10^{-8},1.46\times10^{-5},9.28\times10^{-3},10.33,5304$.
In order to ensure the numerical accuracy, we can use multiple
precision packages, e.g., the Multiple Precision Toolbox for Matlab\cite{MPTM}.
\end{remark}
\begin{remark}
  The coefficients in (\ref{eq:1.11}) can also be obtained from a Vandermonde system which is formulated by substituting the $N+1$ collocation points $x_i$ into (\ref{eq:1.11}).
\end{remark}
Thanks to the equalities (\ref{eq:1.10}), i.e.,
$_aD_x^{\alpha}[(x-a)^{\gamma}]=\frac{\Gamma(\gamma+1)}
{\Gamma(\gamma+1-\alpha)}(x-a)^{\gamma-\alpha}$ and (\ref{eq:1.11}),
the left Riemann-Liouville fractional derivative of $u_{_N}(x,t)$ in
(\ref{eq:1.13}) becomes
\begin{equation}
  \begin{split}
    _aD_x^{\alpha}u_{_N}(x,t)&= {_a}D_x^{\alpha}\sum_{i=0}^Nu(x_i,t)l_i(x)\\
    &=\sum_{i=0}^Nu(x_i,t)\ {_a}D_x^{\alpha}\Big(\sum_{k=0}^Nc_{i,k}(x-a)^{N-k}\Big)\\
    &=\sum_{i=0}^N\Big[\sum_{k=0}^Nc_{i,k}\frac{\Gamma(N-k+1)}{\Gamma(N-k+1-\alpha)}
    (x-a)^{N-k-\alpha}\Big]u(x_i,t).
  \end{split}
\end{equation}
Then the differentiation matrix ${_L}D^{(\alpha)}$ of the left
Riemann-Liouville fractional derivative is
\begin{equation}\label{eq:1.6.0}
  {_L}D_{ji}^{(\alpha)}=\sum_{k=0}^Nc_{i,k}\frac{\Gamma(N-k+1)}{\Gamma(N-k+1-\alpha)}
    (x_j-a)^{N-k-\alpha},\quad i,j=0,\ldots,N.
\end{equation}
For the right Riemann-Liouville derivative of $u_{_N}(x,t)$, it
follows from (\ref{eq:1.10}) that
\begin{equation}
  \begin{split}\label{eq:1.12}
    l_i(x)&=\prod_{j=0,j\neq i}^N\frac{x-x_j}{x_i-x_j}=\sum_{k=0}^Nd_{i,k}(x-b)^{N-k}\\
    _xD_b^{\alpha}u_{_N}(x,t)&= {_xD_b^{\alpha}}\sum_{i=0}^Nu(x_i,t)l_i(x)\\
    &=\sum_{i=0}^Nu(x_i,t)\ _xD_b^{\alpha}\Big(\sum_{k=0}^Nd_{i,k}(x-b)^{N-k}\Big)\\
    &=\sum_{i=0}^N\Big[\sum_{k=0}^Nd_{i,k}\frac{(-1)^{N-k}\Gamma(N-k+1)}{\Gamma(N-k+1-\alpha)}
    (b-x)^{N-k-\alpha}\Big]u(x_i,t).
  \end{split}
\end{equation}
Thus the differentiation matrix ${_R}D^{(\alpha)}$ of the right
Riemann-Liouville fractional derivative is
\begin{equation}\label{eq:1.6.1}
  {_R}D_{ji}^{(\alpha)}=\sum_{k=0}^Nd_{i,k}\frac{(-1)^{N-k}\Gamma(N-k+1)}{\Gamma(N-k+1-\alpha)}
    (b-x_j)^{N-k-\alpha},\quad i,j=0,\ldots,N.
\end{equation}
By the same approach for Riemann-Liouville fractional derivatives and notice that for $n-1<\alpha<n$,
\begin{equation}
    _a^CD_x^{\alpha}[(x-a)^{\gamma}]={_x^C}D_b^{\alpha}[(x-b)^{\gamma}]=0,~
    \gamma=0,\ldots,n-1,
\end{equation}
we obtain the differentiation matrixes of the left and right Caputo fractional derivatives,
\begin{align}\label{eq:1.6.3}
   & {_L^C}D_{ji}^{(\alpha)}=\sum_{k=0}^{N-n}c_{i,k}
     \frac{\Gamma(N-k+1)}{\Gamma(N-k+1-\alpha)}(x_j-a)^{N-k-\alpha},\quad i,j=0,\ldots,N,\\
   & {_R^C}D_{ji}^{(\alpha)}=\sum_{k=0}^{N-n}d_{i,k}\frac{(-1)^{N-k}\Gamma(N-k+1)}
     {\Gamma(N-k+1-\alpha)}(b-x_j)^{N-k-\alpha},\quad i,j=0,\ldots,N.
\end{align}
\begin{remark}
The norm of the differentiation matrix ${_L}D^{(\alpha)}$ is
infinity if some point $x_j=a$ is used, and the norm of
${_R}D^{(\alpha)}$ is infinity if some point $x_j=b$ is chosen, this
is because that the Riemann-Liouville fractional derivatives
${_a}D_x^{\alpha}[(x-a)^{\gamma}]$ and
${_x}D_b^{\alpha}[(b-x)^{\gamma}]$ in (\ref{eq:1.10}) are infinity
at $x=a$ and $x=b$ for $\gamma<\alpha$ respectively. But this
situation does not occur in the differentiation matrixes of the left
and right Caputo fractional derivatives.
\end{remark}
\begin{remark}
   For $\alpha\rightarrow n$, the matrix $ {_L^C}D^{(\alpha)} $ in (\ref{eq:1.6.3}) is the differentiation matrix of the integer derivative of order $n$; particularly the differential matrixes (\ref{eq:1.6.2}) and (\ref{eq:3.2}) of the first order derivative on Legendre-Gauss-Lobatto and Chebyshev-Gauss-Lobatto points respectively, used in the numerical algorithm in the sequel, can also be obtained by (\ref{eq:1.6.3}) indeed. It coincides with the property that ${_a^C}D_x^{\alpha}f(x)=\frac{\mathrm{d}^n}{\mathrm{d}x^n}f(x)$ when $\alpha\rightarrow n$ \cite{Podlubny:99}.
\end{remark}
\begin{proposition}
   If the collocation points $\{x_i:~i=0,\ldots,N\}$ within the interval $[a,b]$ satisfy $x_i-a=b-x_{N-i}$ for $i=0,\ldots,N$,
   then for the coefficients in (\ref{eq:1.11}) and (\ref{eq:1.12}) and the differentiation matrixes of fractional derivatives, the following holds
   \begin{itemize}
     \item[(1)] $c_{i,k}=(-1)^{N+k}d_{N-i,k},\quad i,k=0,\ldots,N$;
     \item[(2)] ${_L}D_{ji}^{(\alpha)}={_R}D_{N-j\ N-i}^{(\alpha)},\quad
                 {_L^C}D_{ji}^{(\alpha)}={_R^C}D_{N-j\ N-i}^{(\alpha)},\quad
                 i,j=0,\ldots,N$;
   \end{itemize}
in fact, the second item (2) says that the differentiation matrix of the right Riemann-Liouville(Caputo) fractional derivative comes by a 180 degrees rotation of the differentiation matrix of the left Riemann-Liouville(Caputo) fractional derivative.
\end{proposition}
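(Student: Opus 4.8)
The plan is to exploit the reflection symmetry implied by the hypothesis. The condition $x_i-a=b-x_{N-i}$ says precisely that the node set is symmetric about the midpoint $(a+b)/2$, so the reflection $x\mapsto a+b-x$ sends $x_i$ to $a+b-x_i=x_{N-i}$ and permutes the collocation points by the involution $i\mapsto N-i$. Everything will follow from tracking how the two expansions, in powers of $(x-a)$ and of $(x-b)$, transform under this reflection; both items then reduce to matching coefficients and bookkeeping of sign factors.

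For item (1), I would first establish the basic identity $l_i(a+b-y)=l_{N-i}(y)$. The map $y\mapsto l_i(a+b-y)$ is a polynomial of degree $N$, and evaluating it at a node $y=x_j$ gives $l_i(a+b-x_j)=l_i(x_{N-j})=\delta_{i,N-j}=\delta_{N-i,j}$; since $l_{N-i}$ is the unique degree-$N$ polynomial taking these nodal values, the two agree. Next I would substitute $x=a+b-y$ into the expansion $l_i(x)=\sum_k d_{i,k}(x-b)^{N-k}$ from (\ref{eq:1.12}); using $x-b=-(y-a)$ this becomes $\sum_k d_{i,k}(-1)^{N-k}(y-a)^{N-k}$. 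Comparing with $l_{N-i}(y)=\sum_k c_{N-i,k}(y-a)^{N-k}$ from (\ref{eq:1.11}) and matching the coefficient of $(y-a)^{N-k}$ yields $c_{N-i,k}=(-1)^{N-k}d_{i,k}$; replacing $i$ by $N-i$ and noting $(-1)^{N-k}=(-1)^{N+k}$ gives the claimed relation $c_{i,k}=(-1)^{N+k}d_{N-i,k}$.

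For item (2), I would substitute directly into the definition (\ref{eq:1.6.1}) evaluated at the rotated indices. Writing out ${_R}D_{N-j\ N-i}^{(\alpha)}$, I would replace $d_{N-i,k}$ by $(-1)^{N+k}c_{i,k}$ using part (1), and use the node symmetry $b-x_{N-j}=x_j-a$ to rewrite $(b-x_{N-j})^{N-k-\alpha}$ as $(x_j-a)^{N-k-\alpha}$. The two parity factors then combine as $(-1)^{N+k}(-1)^{N-k}=(-1)^{2N}=1$, leaving exactly the defining sum (\ref{eq:1.6.0}) for ${_L}D_{ji}^{(\alpha)}$. The Caputo identity is obtained verbatim: the sole difference is that $k$ runs only to $N-n$, and the identical term-by-term cancellation applies to (\ref{eq:1.6.3}) and its right-sided counterpart.

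The computation is elementary once the reflection identity $l_i(a+b-y)=l_{N-i}(y)$ is in hand, so the only real care needed is the sign bookkeeping: keeping track of the parity $(-1)^{N-k}$ coming from $(x-b)^{N-k}=(-(y-a))^{N-k}$ together with the $(-1)^{N-k}$ built into the definition of the right derivative, and checking they cancel against the $(-1)^{N+k}$ from part (1). I expect the crux to be the clean justification of $l_i(a+b-y)=l_{N-i}(y)$ by uniqueness of the interpolating polynomial; after that, both items are a matter of substitution and sign tracking.
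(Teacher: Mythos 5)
Your proposal is correct and follows essentially the same route as the paper: relate the $(x-b)$-expansion of $l_{N-i}$ to the $(x-a)$-expansion of $l_i$ via the reflection $x\mapsto a+b-x$, compare coefficients to get item (1), and then substitute into the defining sums of the differentiation matrices, where the parity factors cancel, to get item (2). The only (cosmetic) difference is that you justify the reflection identity $l_i(a+b-y)=l_{N-i}(y)$ by uniqueness of the degree-$N$ interpolant at the $N+1$ nodes, whereas the paper obtains the same coefficient relation by directly re-indexing and manipulating the product form of $l_{N-i}$.
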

\begin{proof}
  From the equality
  \begin{equation}
     l_i(x)=\prod_{j=0,j\neq i}^N\frac{x-x_j}{x_i-x_j}=
     \prod_{j=0,j\neq i}^N\frac{(x-a)-(x_j-a)}{(x_i-a)-(x_j-a)}
     =\sum_{k=0}^Nc_{i,k}(x-a)^{N-k},
  \end{equation}
  it is easily observed that
  \begin{equation}
     \prod_{j=0,j\neq i}^N\frac{(x-a)+(x_j-a)}{(x_i-a)-(x_j-a)}
     =\sum_{k=0}^N(-1)^kc_{i,k}(x-a)^{N-k}.
  \end{equation}
 From (\ref{eq:1.12}) and $x_j-a=b-x_{N-j}$, it yields
  \begin{equation}
    \begin{split}
      l_{N-i}(x) & =\sum_{k=0}^Nd_{N-i,k}(x-b)^{N-k}
                   =\prod_{j=0,j\neq N-i}^N\frac{x-x_j}{x_{N-i}-x_j}\\
      & =\prod_{j=0,j\neq i}^N\frac{(x-b)-(x_{N-j}-b)}{(x_{N-i}-b)-(x_{N-j}-b)}\\
      & =(-1)^N\prod_{j=0,j\neq i}^N\frac{(x-b)+(x_j-a)}{(x_i-a)-(x_j-a)}\\
      & =\sum_{j=0,j\neq i}^N(-1)^{N+k}c_{i,k}(x-b)^{N-k}.
    \end{split}
  \end{equation}
  Comparing the coefficients, we obtain $c_{i,k}=(-1)^{N+k}d_{N-i,k}$.
  And by (\ref{eq:1.6.0}) and (\ref{eq:1.6.1}), we have
  \begin{equation}
    \begin{split}
      {_L}D_{ji}^{(\alpha)} & =\sum_{k=0}^Nc_{i,k}
        \frac{\Gamma(N-k+1)}{\Gamma(N-k+1-\alpha)}(x_j-a)^{N-k-\alpha}\\
      & = \sum_{k=0}^N(-1)^{N-k}d_{N-i,k}
        \frac{\Gamma(N-k+1)}{\Gamma(N-k+1-\alpha)}(b-x_{N-j})^{N-k-\alpha}\\
      & = {_R}D_{N-j\ N-i}^{(\alpha)}.
    \end{split}
  \end{equation}
Using the similar way as above, we can also prove
${_L^C}D_{ji}^{(\alpha)}={_R^C}D_{N-j\ N-i}^{(\alpha)}$.
\end{proof}
In \cite{Hesthaven:07}, the differentiation matrixes based on
Legendre-Gauss-Lobatto and Chebyshev-Gauss-Lobatto points of first
order derivative are, respectively, given by
\begin{equation}\label{eq:1.6.2}
  D_{ji}^l=\left\{
  \begin{split}
    & -\frac{N(N+1)}{4}, && i=j=0,\\
    & \frac{P_N(x_j)}{P_N(x_i)}\frac{1}{x_j-x_i}, && i\neq j,\\
    & 0, && i=j\in[1,\ldots,N-1],\\
    & \frac{N(N+1)}{4}, && i=j=N,
  \end{split}\right.
\end{equation}
and
\begin{equation}\label{eq:3.2}
  D_{ji}^c=\left\{
  \begin{split}
    & -\frac{2N^2+1}{6}, && i=j=0,\\
    & \frac{c_j}{c_i}\frac{(-1)^{i+j}}{x_j-x_i}, && i\neq j,\\
    & -\frac{x_i}{2(1-x_i^2)}, && i=j\in[1,\ldots,N-1],\\
    & \frac{2N^2+1}{6}, && i=j=N,
  \end{split}\right.
\end{equation}
where
\begin{equation*}
  c_{i}=\left\{
  \begin{split}
    & 2, && i=0,N,\\
    & 1, && i\in[1,\ldots,N-1].
  \end{split}\right.
\end{equation*}
In the following, we will also use the denotation $D_{ji}$ which
means both $D_{ji}^l$ and $D_{ji}^c$ work over there.
\section{Spectral Collocation Method for Space Fractional \\Advection-Diffusion Equation in One Dimension}\label{sec:3}
We consider the one dimensional linear case of (\ref{eq:0.1}) with
the Dirichlet/Neumann/mixed boundary conditions, i.e., the following
space fractional differential equation
\begin{equation}\left\{
  \begin{split}\label{eq:1.1}
    & \frac{\partial u(x,t)}{\partial t}=\kappa_{\alpha}\nabla^{\alpha}u(x,t)-\nu_{\alpha}\nabla
      u(x,t)+f(x,t),\quad && -1<x<1,~t>0,\\
    & \alpha_1u(-1,t)-\beta_1\frac{\partial u(-1,t)}{\partial x}=g_1(t),\quad && t>0,\\
    & \alpha_2u(1,t)+\beta_2\frac{\partial u(1,t)}{\partial x}=g_2(t),\quad && t>0,\\
    & u(x,0)=h(x),\quad && -1\le x\le 1,
  \end{split}\right.
\end{equation}
where $1<\alpha<2,~\kappa_{\alpha},\nu_{\alpha}>0$, and
$\alpha_i^2+\beta_i^2>0,~\alpha_i,\beta_i\ge0,\, i=1,2$;\,
$\beta_1=\beta_2=0$ corresponds to the Dirichlet boundary condition,
$\alpha_1=\alpha_2=0$ the Neumann one, and otherwise the mixed one.
We will apply polynomial spectral collocation method based on
Legendre-Gauss-Lobatto/Chebyshev-Gauss-Lobatto collocation points to
approximate (\ref{eq:1.1}).
\subsection{Spectral Collocation Method}
The polynomial spectral collocation method for problem
(\ref{eq:1.1}) is to find $u_{_N}\in\mathbb{P}_N$, being the space
of polynomials of degree equal or less than $N$, such that
\begin{equation}\left\{
  \begin{split}
    & \frac{\mathrm{d}u_{_N}(x_j,t)}{\mathrm{d} t}=\kappa_{\alpha}\nabla^{\alpha}u_{_N}(x_j,t)
      -\nu_{\alpha}\nabla u_{_N}(x_j,t)+f(x_j,t),\, 1\le j\le N-1,\\
    & \alpha_1u_{_N}(-1,t)-\beta_1\frac{\partial u_{_N}(-1,t)}{\partial x}=g_1(t),\\
    & \alpha_2u_{_N}(1,t)+\beta_2\frac{\partial u_{_N}(1,t)}{\partial x}=g_2(t),\\
    & u_{_N}(x_j,0)=h(x_j),\quad 0\le j\le N.
  \end{split}\right.
\end{equation}
That is, for the interior points $x_j,~j=1,\ldots,N-1$, excluding
the points $-1$ and $+1$,
\begin{equation}
  \begin{split}
    & \frac{\mathrm{d}u_{_N}(x_j,t)}{\mathrm{d}
      t}=\sum_{i=0}^N\Big(\kappa_{\alpha}\big(p\ {_LD_{ji}^{(\alpha)}}
      +q\ {_RD_{ji}^{(\alpha)}}\big)-\nu_{\alpha}D_{ji}\Big)u_{_N}(x_i,t)+f(x_j,t),
  \end{split}
\end{equation}
and for boundary and initial conditions, the following hold
\begin{equation}
  \begin{split}
    & \alpha_1u_{_N}(-1,t)-\beta_1\sum_{i=0}^ND_{0i}u_{_N}(x_i,t)=g_1(t),\\
    & \alpha_2u_{_N}(1,t)+\beta_2\sum_{i=0}^ND_{Ni}u_{_N}(x_i,t)=g_2(t),\\
    & u_{_N}(x_j,0)=h(x_j),\quad j=0,\ldots,N.
  \end{split}
\end{equation}
Thus, we get a 2-by-2 system for the computation of the boundary values
\begin{equation}
  \begin{split}
    & (\alpha_1-\beta_1D_{00})u_{_N}(-1,t)-\beta_1D_{0N}u_{_N}(1,t)=g_1(t)
      +\beta_1\sum_{i=1}^{N-1}D_{0i}u_{_N}(x_i,t),\\
    & \beta_2D_{N0}u_{_N}(-1,t)+(\alpha_2+\beta_2D_{NN})u_{_N}(1,t)=g_2(t)
      -\beta_2\sum_{i=1}^{N-1}D_{Ni}u_{_N}(x_i,t).
  \end{split}
\end{equation}
There are several ways to discretize (\ref{eq:1.1}) in the time
direction, here we take the $\theta$ scheme. Then the fully discrete
scheme for (\ref{eq:1.1}) is to find $u_{_N}\in \mathbb{P}_N$ such
that for $1\le j\le N-1$, 
\begin{equation}\left\{
  \begin{split}\label{eq:1.100}
    & D_{\tau}U^{k+1}(x_j)=\kappa_{\alpha}\nabla^{\alpha}U^{k+1}_{\theta}(x_j)
      -\nu_{\alpha}\nabla U^{k+1}_{\theta}(x_j)+f^{k+1}_{\theta}(x_j),\\
    & \alpha_1U^{k}(-1)-\beta_1\frac{\partial U^{k}(-1)}{\partial x}=g_1(t_k),\\
    & \alpha_2U^{k}(1)+\beta_2\frac{\partial U^{k}(1)}{\partial x}=g_2(t_k),\\
    & U^0(x_i)=h(x_i), \quad i=0,\ldots,N.
  \end{split}\right.
\end{equation}
where $U^k(x_j)=u_{_N}(x_j,t_k),~f^k(x_j)=f(x_j,t_k)$, $t_k=k\tau$,
$\tau$ is the time step size and $k$ is an integer,
$0\le\theta\le1$, and the notations $D_{\tau}U^{k+1}$ and
$v_{\theta}^{k+1}$ are used as
\begin{equation*}
  D_{\tau}U^{k+1}(x_j)=\frac{U^{k+1}(x_j)-U^k(x_j)}{\tau}, \qquad v_{\theta}^{k+1}=\theta v^{k+1}+(1-\theta)v^{k}.
\end{equation*}
\subsection{Stability}
Taking Legendre-Gauss-Lobatto points as collocation points, we carry
out the stability analysis of (\ref{eq:1.100}) with the Dirichlet
boundary condition, i.e., $\alpha_1=\alpha_2=1$ and
$\beta_1=\beta_2=0$, and naturally we can take
$g_1(t)=g_2(t)\equiv0$. Generally, the discrete inner product and norm are defined as follows,
\begin{equation}
  (u,v)_{_N}=\sum_{i=0}^Nu(x_i)v(x_i)w_i,\qquad \|u\|_{_N}=\sqrt{(u,u)_{_N}},
\end{equation}
where $\{x_i\}$ and $\{w_i\}$ are the Legendre-Gauss-Lobatto points
and the corresponding quadrature weights given by
\begin{equation}\label{eq:1.0}
    (1-x_i^2)P_N^{'}(x_i)=0,\quad w_i=\frac{2}{N(N+1)}\big(P_N(x_i)\big)^{-2}, \quad 0\le i\le N,
\end{equation}
and $-1=x_0<x_1<\cdots<x_i<\cdots<x_{_N}=1$.

It is obvious that $\|\phi\|_{L^2}=\|\phi\|_{_N}$ if $\phi$ is a
polynomial of degree less than $N$. And for any polynomial
$\phi\in\mathbb{P}_N$, the discrete norm is equivalent to the $L^2$
norm \cite{Guo:98}, namely
\begin{equation}\label{eq:1.2}
    \|\phi\|_{L^2}\le\|\phi\|_{_N}\le\sqrt{2+\frac{1}{N}}\|\phi\|_{L^2}.
\end{equation}
If $N$ is a fixed positive integer and $u_{_N}\in\mathbb{P}_N^0$,
being the space of polynomials of degree equal or less than $N$ and
with zero boundary values, then it can be expanded as a combination
of Legendre orthogonal polynomials $\{P_m(x):~m=0,\ldots,N\}$,
\begin{equation}
    u_{_N}(x)=c_0P_0(x)+c_1P_1(x)+\cdots+c_{_N}P_{N}(x),
\end{equation}
and the left Riemann-Liouville fractional derivative of $u_{_N}$ can be expressed as
\begin{equation}
    {_{-1}}D_x^\alpha u_{_N}(x)=c_0\psi_0(x)+c_1\psi_1(x)+\cdots+c_{_N}\psi_{_N}(x),
\end{equation}
where $\psi_m(x)={_{-1}}D_x^\alpha P_m(x)$.

As $\{P_m(x)\}$ are Legendre orthogonal polynomials, and with (\ref{eq:1.2}), it yields
\begin{equation}\label{eq:2.0}
  \begin{split}
	\|u_{_N}\|_{_N}^2 & \ge\|u_{_N}\|_{L^2}^2=\|\sum_{m=0}^Nc_mP_m(x)\|_{L^2}^2\\
	& =\sum_{m=0}^Nc_m^2\|P_m(x)\|_{L^2}^2\ge\frac{N}{2N+1}\sum_{m=0}^Nc_m^2\|P_m(x)\|_{_N}^2.
  \end{split}
\end{equation}
For each $m$, the summation $\sum_{i=1}^{N-1}\psi_m^2(x_i)\omega_i$ is a fixed positive number determined by $N$, and by using (\ref{eq:1.2}) and $\|P_m(x)\|_{L^2}=(m+1/2)^{-1/2}$ (see \cite{Guo:98}), it yields that $\|P_m(x)\|_{_N}$ is positive and bounded. Then there exists a constant $C$ dependent on $N$ such that
\begin{equation}\label{eq:2.4}
    \sum_{i=1}^{N-1}\psi_m^2(x_i)\omega_i \le C\|P_m(x)\|_{_N}^2,
\end{equation}
where $\{x_i\}_{i=0}^N$ and $\{w_i\}_{i=0}^N$ are given by (\ref{eq:1.0}).

Thus with the inequalities (\ref{eq:2.0}), (\ref{eq:2.4}) and $(\sum_{m=0}^Na_m)^2\le(N+1)\sum_{m=0}^Na_m^2$, we have
\begin{equation}
  \begin{split}
    \sum_{i=1}^{N-1} ({_{-1}}D_x^\alpha u_{_N}(x_i))^2\omega_i
      & \le(N+1)\sum_{m=0}^Nc_m^2\sum_{i=1}^{N-1}\psi_m^2(x_i)\omega_i \\
      & \le C(N)\sum_{m=0}^Nc_m^2\|P_m(x)\|_{_N}^2\le C(N)\|u_{_N}\|_{_N}^2.
  \end{split}
\end{equation}
The similar result can be obtained for the right Riemann-Liouville
fractional derivative of $u_{_N}$, thus it yields
\begin{equation}\label{eq:1.14}
    \sum_{i=1}^{N-1} (\nabla^\alpha u_{_N}(x_i))^2\omega_i \le C(N)\|u_{_N}\|_{_N}^2.
\end{equation}
Based on the usual weak formulation of (\ref{eq:1.1}), the
semi-discrete scheme for (\ref{eq:1.1}) with the homogeneous
Dirichlet boundary conditions reads: Find $u_{_N}\in\mathbb{P}_N^0$
such that for any $v\in\mathbb{P}_N^0$ there exists
\begin{equation}\left\{
  \begin{split}\label{eq:1.4}
    & \big(\frac{\partial u_{_N}}{\partial t},v\big)_{N}=\kappa_\alpha\big(\nabla^\alpha
      u_{_N}, v\big)_{N}-v_\alpha\big(\nabla u_{_N}, v\big)_{N}
      +\big(f, v\big)_{N},\\
    & u_{_N}(x_j,0)=h(x_j), \quad j=0,\ldots,N,
  \end{split}\right.
\end{equation}
where $x_j,\,j=0,\ldots,N,$ are Legendre-Gauss-Lobatto points.
\begin{theorem}[Stability of the semi-discrete scheme]
  For any fixed $N$, the semi-discrete scheme (\ref{eq:1.4}) is stable.
\end{theorem}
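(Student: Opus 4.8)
The plan is to establish stability through the standard discrete energy method. First I would take the test function $v=u_{_N}$ in the weak formulation (\ref{eq:1.4}), which is admissible since $u_{_N}\in\mathbb{P}_N^0$. Because every element of $\mathbb{P}_N^0$ vanishes at the two boundary nodes $x_0=-1$ and $x_N=1$, these nodes drop out of all the discrete inner products, so effectively each sum runs over the interior points $i=1,\ldots,N-1$. The temporal term collapses to $\big(\tfrac{\partial u_{_N}}{\partial t},u_{_N}\big)_{_N}=\tfrac12\frac{\mathrm{d}}{\mathrm{d}t}\|u_{_N}\|_{_N}^2$, and the task reduces to bounding the diffusion, advection, and source contributions on the right-hand side.

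The central step is the control of the fractional diffusion term. Applying the discrete Cauchy--Schwarz inequality and then the key estimate (\ref{eq:1.14}) gives
\[
\big|\kappa_\alpha(\nabla^\alpha u_{_N},u_{_N})_{_N}\big|\le \kappa_\alpha\Big(\sum_{i=1}^{N-1}(\nabla^\alpha u_{_N}(x_i))^2\omega_i\Big)^{1/2}\|u_{_N}\|_{_N}\le \kappa_\alpha\sqrt{C(N)}\,\|u_{_N}\|_{_N}^2.
\]
For the advection term I would use the fact that, for a fixed $N$, the first-order differentiation operator is bounded on the finite-dimensional space $\mathbb{P}_N^0$ (equivalently, one derives an $N$-dependent estimate parallel to (\ref{eq:1.14}), recalling that the first-order differentiation matrix is recovered as the $\alpha\to1$ limit of the Caputo matrix), so that Cauchy--Schwarz yields $|\nu_\alpha(\nabla u_{_N},u_{_N})_{_N}|\le \nu_\alpha C'(N)\|u_{_N}\|_{_N}^2$. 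The source term is handled by Cauchy--Schwarz followed by Young's inequality, $|(f,u_{_N})_{_N}|\le\tfrac12\|f\|_{_N}^2+\tfrac12\|u_{_N}\|_{_N}^2$.

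Combining these three bounds produces a differential inequality
\[
\frac{\mathrm{d}}{\mathrm{d}t}\|u_{_N}\|_{_N}^2\le C_N\|u_{_N}\|_{_N}^2+\|f\|_{_N}^2,
\]
in which the constant $C_N$ depends only on $N,\alpha,\kappa_\alpha$ and $\nu_\alpha$. A direct application of Gr\"onwall's inequality then gives
\[
\|u_{_N}(t)\|_{_N}^2\le e^{C_N t}\|u_{_N}(0)\|_{_N}^2+\int_0^t e^{C_N(t-s)}\|f(s)\|_{_N}^2\,\mathrm{d}s,
\]
so that, with $\|u_{_N}(0)\|_{_N}$ controlled by the initial datum $h$, the numerical solution is bounded in terms of the data on any finite time interval, which is exactly the claimed stability for fixed $N$.

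I expect the only genuine obstacle to be the fractional diffusion term, because $\nabla^\alpha$ is nonlocal and has no obvious sign or skew-symmetry in the discrete inner product; this is precisely what the preparatory estimate (\ref{eq:1.14}) overcomes, so once it is invoked the remainder is a routine assembly of the energy inequality and Gr\"onwall's lemma. The one subsidiary point worth checking carefully is the $N$-dependent boundedness of the first-order differentiation operator in the discrete norm, which follows either from the classical polynomial inverse inequality together with the norm equivalence (\ref{eq:1.2}) or from the same argument used to derive (\ref{eq:1.14}).
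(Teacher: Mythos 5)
Your proposal is correct and follows essentially the same route as the paper: take $v=u_{_N}$, control the fractional diffusion term by Cauchy--Schwarz together with the key estimate (\ref{eq:1.14}), absorb the source term by Young's inequality, and conclude with Gr\"onwall. The one place you diverge is the advection term: the paper does not merely bound it but shows it vanishes exactly, since $\nabla u_{_N}\cdot u_{_N}$ is a polynomial of degree $2N-1$, so the Legendre--Gauss--Lobatto quadrature is exact and $\big(\nabla u_{_N},u_{_N}\big)_{_N}=\tfrac12\big(u_{_N}^2(1,t)-u_{_N}^2(-1,t)\big)=0$ by the homogeneous Dirichlet data; your $N$-dependent bound via an inverse inequality is also valid and costs nothing here, because the diffusion estimate already forces the constant to depend on $N$, but the paper's observation is sharper (the advection contributes no growth at all) and is the same device reused in the fully discrete stability proof.
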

\begin{proof}
  Taking $v=u_{_N}$ in (\ref{eq:1.4}), we obtain
  \begin{equation}
    \frac{1}{2}\frac{\mathrm{d}}{\mathrm{d}t}\|u_{_N}\|_{_N}^2=\kappa_\alpha\big(\nabla^\alpha u_{_N}, u_{_N}\big)_{N}-v_\alpha\big(\nabla u_{_N}, u_{_N}\big)_{N}
    +\big(f, u_{_N}\big)_{N}.
  \end{equation}
  Since $u_{_N}$ is a polynomial of degree $N$ and $u_{_N}(\pm1,t)=0$, it has
  \begin{equation}
    \big(\nabla u_{_N}, u_{_N}\big)_{N}=\frac{1}{2}\int_{-1}^1\big(u_{_N}^2(\xi,t)\big)_{\xi}\mathrm{d}\xi
    =\frac{1}{2}\big(u_{_N}^2(1,t)-u_{_N}^2(-1,t)\big)=0.
  \end{equation}
  As $u_{_N}(\pm1,t)=0$, by using the Cauchy-Schwartz inequality and (\ref{eq:1.14}), we have
  \begin{equation}\label{eq:1.15}
      \big(\nabla^\alpha u_{_N}, u_{_N}\big)_{N} \le C(N)\|u_{_N}\|_{_N}^2.
  \end{equation}
  The above discussion implies that the following inequality holds for every fixed $N$,
  \begin{equation}
     \frac{\mathrm{d}}{\mathrm{d}t}\|u_{_N}\|_{_N}^2\le C\|u_{_N}\|_{_N}^2
     +\|f\|_{_N}^2,
  \end{equation}
  then from the Gronwall inequality, we have
  \begin{equation*}
     \|u_{_N}(\cdot,T)\|_{_N}\le \exp(\frac{CT}{2})\Big(\|u_{_N}(\cdot,0)\|_{_N}
     +\int_0^T\|f(\cdot,t)\|_{_N}^2\mathrm{d}t\Big)^{1/2}.
  \end{equation*}
\end{proof}
Next we consider the full-discrete scheme for (\ref{eq:1.1}) with homogeneous Dirichlet boundary conditions, for any $k\ge0$ and $U^{k+1}\in \mathbb{P}_N^0$ such that
\begin{equation}\label{eq:1.3}
  \frac{U^{k+1}(x_j)-U^k(x_j)}{\tau}=\kappa_{\alpha}\nabla^{\alpha}U_{\theta}^{k+1}(x_j)
      -\nu_{\alpha}\nabla
      U_{\theta}^{k+1}(x_j)+f_{\theta}^{k+1}(x_j),
\end{equation}
where $x_j,\,j=0,\ldots,N,$ are Legendre-Gauss-Lobatto points.
The stability of the full-discrete scheme is given
by the following theorem.
\begin{theorem}[Stability of the full-discrete scheme]
  For any fixed $N$ and $\frac{1}{2}\le\theta\le1$, the full-discrete scheme (\ref{eq:1.3}) is stable, and
  \begin{equation}\label{eq:1.9}
    \|U^{M}\|_{_N}\le\exp(\frac{CT}{2})\Big(\|U^{0}\|_{_N}^2+
    \tau\sum_{k=0}^M\|f^k\|_{_N}^2\Big)^{1/2},
  \end{equation}
  where $T=M\tau$.
\end{theorem}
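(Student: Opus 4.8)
The plan is to replicate the energy argument used for the semi-discrete scheme, but now at the discrete-in-time level, testing the scheme (\ref{eq:1.3}) against the intermediate value $U_\theta^{k+1}=\theta U^{k+1}+(1-\theta)U^k$ in the discrete inner product $(\cdot,\cdot)_{_N}$. Since both $U^{k+1},U^k\in\mathbb{P}_N^0$, so is $U_\theta^{k+1}$, and in particular it vanishes at $x=\pm1$; hence every spatial estimate already established for the semi-discrete case applies verbatim with $u_{_N}$ replaced by $U_\theta^{k+1}$.

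First I would treat the left-hand side. Writing $U_\theta^{k+1}=\tfrac12(U^{k+1}+U^k)+(\theta-\tfrac12)(U^{k+1}-U^k)$ and pairing with $U^{k+1}-U^k$ gives the exact identity
\[
\big(U^{k+1}-U^k,\,U_\theta^{k+1}\big)_{_N}=\tfrac12\big(\|U^{k+1}\|_{_N}^2-\|U^k\|_{_N}^2\big)+\big(\theta-\tfrac12\big)\|U^{k+1}-U^k\|_{_N}^2.
\]
This is the crucial place where the hypothesis $\theta\ge\tfrac12$ enters: the last term is nonnegative and may be discarded, leaving the lower bound $\big(U^{k+1}-U^k,U_\theta^{k+1}\big)_{_N}\ge\tfrac12(\|U^{k+1}\|_{_N}^2-\|U^k\|_{_N}^2)$.

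Next I would bound the three terms arising from the right-hand side of (\ref{eq:1.3}). The advection term vanishes exactly: because $U_\theta^{k+1}\,\nabla U_\theta^{k+1}$ has degree at most $2N-1$, the Gauss-Lobatto quadrature is exact and $(\nabla U_\theta^{k+1},U_\theta^{k+1})_{_N}=\tfrac12\int_{-1}^1\big((U_\theta^{k+1})^2\big)'\,\mathrm{d}x=0$, just as in the semi-discrete proof. The fractional diffusion term is controlled by (\ref{eq:1.14})--(\ref{eq:1.15}), yielding $\kappa_\alpha(\nabla^\alpha U_\theta^{k+1},U_\theta^{k+1})_{_N}\le C(N)\|U_\theta^{k+1}\|_{_N}^2$, and the source term is handled by the Cauchy--Schwarz and Young inequalities, $(f_\theta^{k+1},U_\theta^{k+1})_{_N}\le\tfrac12\|f_\theta^{k+1}\|_{_N}^2+\tfrac12\|U_\theta^{k+1}\|_{_N}^2$. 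Finally I would use the convexity bounds $\|U_\theta^{k+1}\|_{_N}^2\le\|U^{k+1}\|_{_N}^2+\|U^k\|_{_N}^2$ and $\|f_\theta^{k+1}\|_{_N}^2\le\|f^{k+1}\|_{_N}^2+\|f^k\|_{_N}^2$ to re-express everything in terms of the nodal quantities at $t_k$ and $t_{k+1}$.

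Combining these estimates yields, for $\tau$ sufficiently small, a recursion of the form
\[
(1-C\tau)\|U^{k+1}\|_{_N}^2\le(1+C\tau)\|U^k\|_{_N}^2+\tau\big(\|f^{k+1}\|_{_N}^2+\|f^k\|_{_N}^2\big),
\]
and hence $\|U^{k+1}\|_{_N}^2\le(1+C'\tau)\|U^k\|_{_N}^2+C'\tau(\|f^{k+1}\|_{_N}^2+\|f^k\|_{_N}^2)$. Summing over $k=0,\dots,M-1$ and invoking the discrete Gronwall inequality then produces $\|U^{M}\|_{_N}^2\le\exp(C'T)\big(\|U^0\|_{_N}^2+\tau\sum_{k=0}^M\|f^k\|_{_N}^2\big)$, and taking square roots gives (\ref{eq:1.9}). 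I expect the crux of the argument to be the time-discrete energy identity for the left-hand side together with the role of $\theta\ge\tfrac12$: discarding the nonnegative term $(\theta-\tfrac12)\|U^{k+1}-U^k\|_{_N}^2$ is precisely what avoids the stronger stability restriction that the difference term would otherwise impose through the large norm of the fractional differentiation matrix, leaving only the mild smallness of $\tau$ needed to close the Gronwall recursion. All remaining spatial ingredients are inherited directly from the semi-discrete analysis.
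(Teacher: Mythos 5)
Your proposal is correct and follows essentially the same route as the paper: test against $U_\theta^{k+1}$, use the $\theta\ge\tfrac12$ energy identity on the left-hand side (your form with the nonnegative $(\theta-\tfrac12)\|U^{k+1}-U^k\|_{_N}^2$ term is just an algebraic rearrangement of the paper's lower bound), kill the advection term by exactness of the Gauss--Lobatto quadrature and the zero boundary values, control the fractional term via (\ref{eq:1.14})--(\ref{eq:1.15}), and close with the discrete Gronwall inequality. The only cosmetic differences are your slightly cruder convexity bound $\|U_\theta^{k+1}\|_{_N}^2\le\|U^{k+1}\|_{_N}^2+\|U^k\|_{_N}^2$ in place of the paper's $\theta$-weighted one, and your explicit mention of the smallness condition on $\tau$ that the paper leaves implicit in its Gronwall step.
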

\begin{proof}
  We can rewrite (\ref{eq:1.3}) in the following form,
  \begin{equation}
    \begin{split}\label{eq:1.7}
      & \big(\frac{U^{k+1}-U^k}{\tau},v\big)_{N}
        =\kappa_\alpha\big(\nabla^{\alpha}U_{\theta}^{k+1}, v\big)_{N}
        -\nu_\alpha\big(\nabla U_{\theta}^{k+1}, v\big)_{N}+\big(f_{\theta}^{k+1},
        v\big)_{N}.
    \end{split}
  \end{equation}
  Setting $v=U_{\theta}^{k+1}$ in (\ref{eq:1.7}) and using the Cauchy-Schwartz inequality, for $\frac{1}{2}\le\theta\le1$, we obtain
  \begin{equation*}
    \begin{split}
      \big(\frac{U^{k+1}-U^k}{\tau},U_{\theta}^{k+1}\big)_{N}
      &= \frac{1}{\tau}\Big(\theta\|U^{k+1}\|_{_N}^2+(1-2\theta)\big(U^{k+1},U^k\big)_N
        -(1-\theta)\|U^k\|_{_N}^2\Big)\\
      & \ge \frac{1}{2\tau}\Big(\|U^{k+1}\|_{_N}^2-\|U^k\|_{_N}^2\Big).
    \end{split}
  \end{equation*}
  Using (\ref{eq:1.15}), for any fixed $N$, it follows
  \begin{equation*}
    \begin{split}
      \big(\nabla^{\alpha}U_{\theta}^{k+1}, U_{\theta}^{k+1}\big)_{N}
      & \le C\|U_{\theta}^{k+1}\|_{_N}^2
        \le C\big(\theta\|U^{k+1}\|_{_N}^2
        +(1-\theta)\|U^k\|_{_N}^2\big).
    \end{split}
  \end{equation*}
  Since $U_{\theta}^{k+1}$ is a polynomial of degree $N$ and $U_{\theta}^{k+1}(\pm1)=0$, then
  \begin{equation*}
    \begin{split}
      \big(\nabla U_{\theta}^{k+1}, U_{\theta}^{k+1}\big)_{N}
      =\frac{1}{2}\int_{-1}^1\nabla\big(U_{\theta}^{k+1}(\xi)\big)^2\mathrm{d}\xi
      =\frac{1}{2}\big(U_{\theta}^{k+1}(1)\big)^2
      -\frac{1}{2}\big(U_{\theta}^{k+1}(-1)\big)^2=0.
    \end{split}
  \end{equation*}
  For the last term,
  \begin{equation*}
    \begin{split}
      & \big(f_{\theta}^{k+1}, U_{\theta}^{k+1}\big)_{N}
        \le \frac{\theta}{2}\|U^{k+1}\|_{_N}^2+\frac{1-\theta}{2}\|U^k\|_{_N}^2
        +\frac{\theta}{2}\|f^{k+1}\|_{_N}^2+\frac{1-\theta}{2}\|f^k\|_{_N}^2.
    \end{split}
  \end{equation*}
  Combining the above equations, we get
  \begin{equation}\label{eq:1.8}
    \begin{split}
      \frac{1}{2\tau}\Big(\|U^{k+1}\|_{_N}^2-\|U^k\|_{_N}^2\Big)
      \le & C\big(\frac{\theta}{2}\|U^{k+1}\|_{_N}^2+\frac{1-\theta}{2}\|U^k\|_{_N}^2\big) \\
      & +\frac{\theta}{2}\|f^{k+1}\|_{_N}^2+\frac{1-\theta}{2}\|f^k\|_{_N}^2.
    \end{split}
  \end{equation}
  Lastly, summing (\ref{eq:1.8}) from $k=0$ to $k=M-1$, it obtains
  \begin{equation}
      \|U^{M}\|_{_N}^2 \le \|U^0\|_{_N}^2+ C\tau\sum_{k=0}^M\|U^{k}\|_{_N}^2+\tau\sum_{k=0}^M\|f^k\|_{_N}^2.
  \end{equation}
  By the discrete Gronwall inequality, we obtain (\ref{eq:1.9}).
\end{proof}
\section{Space Fractional Advection-Diffusion Equation in Two Dimensions}\label{sec:4}
\subsection{Linear Case of (\ref{eq:0.1})}
We establish the numerical scheme for the linear space fractional
advection-diffusion equation in two dimensions,
\begin{equation}\left\{
  \begin{split}\label{eq:2.1}
    & \frac{\partial u}{\partial t}
      =\kappa_{\alpha}(\nabla^{\alpha}_x+\nabla^{\alpha}_y)u
      -(\nu_{\alpha}^1,\nu_{\alpha}^2)^{\mathrm{T}} \cdot \nabla u+f(x,y,t), && (x,y)\in\Omega,~t>0,\\
    & u(x,y,t)=g(x,y,t), && (x,y)\in\partial\Omega,~t>0,\\
    & u(x,y,0)=h(x,y), && (x,y)\in\bar{\Omega},
  \end{split}\right.
\end{equation}
where
$\Omega=(-1,1)^2,~\kappa_{\alpha}>0,\,\nu_{\alpha}^1=\nu_{\alpha}^2=\nu_{\alpha}>0$.
The Lagrange interpolation of $u$ at the mesh grid points
$\{(x_i,y_j):0\le i,j\le N\}$ is
\begin{equation}
  u_{_N}(x,y,t)=\sum_{i,j=0}^Nu(x_i,y_j,t)l_i(x)l_j(y),
\end{equation}
where $l_i(x)$ and $l_j(y)$ are the Lagrange interpolation
polynomials, and $x_i$ and $y_j$ are
Legendre-Gauss-Lobatto/Chebshev-Gauss-Lobatto points.
\begin{figure}[H]
  \centering
  \includegraphics[scale=0.5]{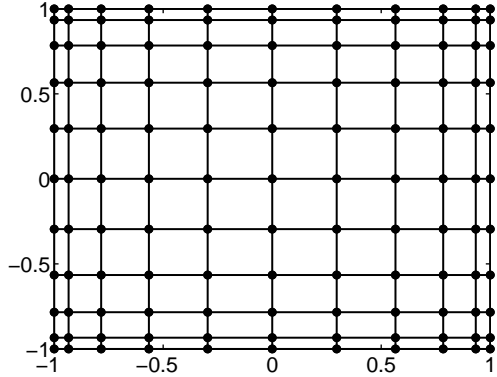}\vspace{-15pt}
  \caption{The Legendre-Gauss-Lobatto collocation grid for $N=10$.}\label{fig:g}
\end{figure}
The spectral collocation method to problem (\ref{eq:2.1}) is to find $u_{_N}\in
\mathbb{P}_N(\Omega)$ such that
\begin{equation}\left\{
  \begin{split}\label{eq:2.2}
    & \frac{\mathrm{d}u_{_N}(x_r,y_s,t)}{\mathrm{d} t}
      =\kappa_{\alpha}(\nabla^{\alpha}_x+\nabla^{\alpha}_y)u_{_N}(x_r,y_s,t)
      -(\nu_{\alpha},\nu_{\alpha})^{\mathrm{T}} \cdot\nabla u_{_N}(x_r,y_s,t)\\
    & \qquad\qquad\qquad\quad\, +f(x_r,y_s,t), \qquad 1\le r,s\le N-1,\\
    & u_{_N}(x_r,y_s,t)=g(x_r,y_s,t),\,\, r=0,N\,( 0\le s\le N) ~\text{and}~ s=0,N\,(0\le r\le N),\\
    & u_{_N}(x_r,y_s,0)=h(x_r,y_s), \,\, ~0\le r,s\le N.
  \end{split}\right.
\end{equation}
Furthermore, for points $\{(x_r,y_s):1\le r,s\le N-1\}$,
\begin{equation}\label{eq:2.3}
  \begin{split}
    \frac{\mathrm{d}u_{_N}(x_r,y_s,t)}{\mathrm{d} t}
    =& \sum_{i=0}^N\big(\kappa_{\alpha}(p\ {_L}D^{\alpha}_{ri}+q\ {_R}D^{\alpha}_{ri})
      -\nu_{\alpha}D_{ri}\big)u_{_N}(x_i,y_s,t)\\
    & +\sum_{j=0}^N\big(\kappa_{\alpha}(p\ {_L}D^{\alpha}_{sj}
      +q\ {_R}D^{\alpha}_{sj})-\nu_{\alpha}D_{sj}\big)u_{_N}(x_r,y_j,t)\\
    & +f(x_r,y_s,t).
  \end{split}
\end{equation}
The values of $u_{_N}$ on boundary points in (\ref{eq:2.3}) are
known, then it leads to an ordinary differential equations of the
$(N-1)^2$ unknown variables. If the unknown variables are arranged
in the sequence as
$[u_{1,1},\ldots,u_{N-1,1},\ldots,u_{1,s},\ldots,u_{N-1,s},
\ldots,u_{1,N-1},\ldots,u_{N-1,N-1}]^{\mathrm{T}},$ where
$u_{r,s}=u_{_N}(x_r,y_s,t),$ then the coefficient matrix
$\mathbf{M}$ of the right hand side is
\begin{equation*}
  \mathbf{M}=\kappa_{\alpha}\big(p(I\otimes D{_L}+D{_L}\otimes I)+q(I\otimes D{_R}+D{_R}\otimes I)\big)-\nu_{\alpha}(I\otimes \tilde{D}+\tilde{D}\otimes I),
\end{equation*}
where
$(D{_L})_{ij}={_L}D^{(\alpha)}_{ij},~(D{_R})_{ij}={_R}D^{(\alpha)}_{ij},~(\tilde{D})_{ij}=D_{ij}$,
$i,j=1,\ldots,N-1$, and ${_L}D^{(\alpha)},~{_R}D^{(\alpha)}$ and $D$
are given by (\ref{eq:1.6.0}), (\ref{eq:1.6.1}) and
(\ref{eq:1.6.2}), (\ref{eq:3.2}), respectively. $I$ is a unit matrix
of order $N-1$, and $\otimes$ stands for Kronecker product.

Using $\theta$ scheme in the time direction for (\ref{eq:2.2}), then
the full-discrete scheme of (\ref{eq:2.1}) is that for $1\le r,s\le
N-1$ the following holds
\begin{equation*}\left\{
  \begin{split}
    & D_{\tau}U^{k+1}(x_r,y_s)=\kappa_{\alpha}(\nabla^{\alpha}_x+\nabla^{\alpha}_y)
      U^{k+1}_{\theta}(x_r,y_s)
      -\nu_{\alpha}\nabla U^{k+1}_{\theta}(x_r,y_s)+f^{k+1}_{\theta}(x_r,y_s),\\
    & U^0(x_r,y_s)=h(x_r,y_s),
  \end{split}\right.
\end{equation*}
and
$U^k(\pm1,y_i)=g(\pm1,y_i,t_k),~U^k(x_i,\pm1)=g(x_i,\pm1,t_k),~i=0,\ldots,N$,
where $U^k(x_r,y_s)=u_{_N}(x_r,y_s,t_k)$,
$f^k(x_r,y_s)=f(x_r,y_s,t_k)$, $\tau$ is the time step size and
$t_k=k\tau$, and the notations $D_{\tau}U^{k+1}$ and
$v_{\theta}^{k+1}$ are used as
\begin{equation*}
  D_{\tau}U^{k+1}(x_r,y_s)=\frac{U^{k+1}(x_r,y_s)-U^k(x_r,y_s)}{\tau}, \qquad v_{\theta}^{k+1}=\theta v^{k+1}+(1-\theta)v^{k}.
\end{equation*}
\subsection{Nonlinear Case of (\ref{eq:0.1})}\label{sec:6}

Our concentration in this subsection is to apply the polynomial
spectral collocation method to seek the numerical solution of the
nonlinear advection-diffusion equation
\begin{equation}\left\{
  \begin{split}\label{eq:4.1}
    & \frac{\partial u}{\partial t}
      =\kappa_{\alpha}(\nabla^{\alpha}_x+\nabla^{\alpha}_y)u^m
      -(\nu_{\alpha},\nu_{\alpha})^{\mathrm{T}}\cdot\nabla u+f(x,y,t), && (x,y)\in\Omega,~t>0,\\
    & u(x,y,t)=g(x,y,t), && (x,y)\in\partial\Omega,~t>0,\\
    & u(x,y,0)=h(x,y), && (x,y)\in\bar{\Omega},
  \end{split}\right.
\end{equation}
where $\Omega=(-1,1)^2$ and the notations are the same as those in
(\ref{eq:2.1}). The spectral collocation method of (\ref{eq:4.1}) is
to find $u_{_N}\in\mathbb{P}_N(\Omega)$ such that for points
$\{(x_r,y_s):1\le r,s\le N-1\}$, there exists
\begin{equation}
  \begin{split}\label{eq:4.2}
    \frac{\mathrm{d}u_{_N}(x_r,y_s,t)}{\mathrm{d} t}
    =& \sum_{i=0}^N\Big(\kappa_{\alpha}(p\ {_L}D^{\alpha}_{ri}+q\ {_R}D^{\alpha}_{ri})
      u_{_N}^m(x_i,y_s,t)-\nu_{\alpha}D_{ri}\ u_{_N}(x_i,y_s,t)\Big)\\
    & +\sum_{j=0}^N\Big(\kappa_{\alpha}(p\ {_L}D^{\alpha}_{sj}
      +q\ {_R}D^{\alpha}_{sj})u_{_N}^m(x_r,y_j,t)-\nu_{\alpha}D_{sj}\ u_{_N}(x_r,y_j,t)\Big)\\
    & +f(x_r,y_s,t),
  \end{split}
\end{equation}
and the boundary and initial conditions
\begin{equation}
  \begin{split}
    & u_{_N}(x_r,y_s,t)=g(x_r,y_s,t),\quad r=0,N\, (0\le s\le N) ~\text{or}~ s=0,N\, (0\le r\le N),\\
    & u_{_N}(x_r,y_s,0)=h(x_r,y_s), \qquad 0\le r,s\le N,
  \end{split}
\end{equation}
where $x_r,~y_s,~r,s=0,\ldots,N$ are the
Legendre-Gauss-Lobatto/Chebyshev-Gauss-Lobatto points. Using the
Crank-Nicholson scheme in (\ref{eq:4.2}) leads to a nonlinear system
\begin{equation}
  \begin{split}\label{eq:4.3}
    F(U^{k+1})=&(E-\frac{\tau}{2}D_I)U^{k+1}-\frac{\tau}{2}D_F(U^{k+1})^m
    -(E+\frac{\tau}{2}D_I)U^{k}-\frac{\tau}{2}D_F(U^{k})^m\\
    &-\frac{\tau}{2}\big(f^{k+1}+f^k\big)=0,
  \end{split}
\end{equation}
where
$$U^{k}=[u_{1,1}^k,\ldots,u_{N-1,1}^k,\ldots,u_{1,s}^k,\ldots,u_{N-1,s}^k,
\ldots,u_{1,N-1}^k,\ldots,u_{N-1,N-1}^k]^{\mathrm{T}},$$
$$u_{r,s}^k=u_{_N}(x_r,y_s,t_k),$$
and
$$f^{k}=[f_{1,1}^k,\ldots,f_{N-1,1}^k,\ldots,f_{1,s}^k,\ldots,f_{N-1,s}^k,
\ldots,f_{1,N-1}^k,\ldots,f_{N-1,N-1}^k]^{\mathrm{T}},$$
\begin{equation*}
  \begin{split}
      f_{r,s}^k= & f(x_r,y_s,t_k)+\big[\kappa_{\alpha}(p\ {_L}D^{(\alpha)}_{r0}+q\ {_R}D^{(\alpha)}_{r0})u_{_N}^m(x_0,y_s,t_k)-\nu_{\alpha}D_{r0}\ u_{_N}(x_0,y_s,t)\big]\\
      & +\big[\kappa_{\alpha}(p\ {_L}D^{(\alpha)}_{rN}+q\ {_R}D^{(\alpha)}_{rN})u_{_N}^m(x_N,y_s,t_k)-\nu_{\alpha}D_{rN}\ u_{_N}(x_N,y_s,t)\big]\\
      & +\big[\kappa_{\alpha}(p\ {_L}D^{(\alpha)}_{s0}+q\ {_R}D^{(\alpha)}_{s0})u_{_N}^m(x_r,y_0,t_k)-\nu_{\alpha}D_{s0}\ u_{_N}(x_r,y_0,t)\big]\\
      & +\big[\kappa_{\alpha}(p\ {_L}D^{(\alpha)}_{sN}+q\ {_R}D^{(\alpha)}_{sN})u_{_N}^m(x_r,y_N,t_k)-\nu_{\alpha}D_{sN}\ u_{_N}(x_r,y_N,t)\big],
  \end{split}
\end{equation*}
and $D_F=\kappa_{\alpha}\big(p(I\otimes D{_L}+D{_L}\otimes
I)+q(I\otimes D{_R}+D{_R}\otimes I)\big),~D_I=-\nu_{\alpha}(I\otimes
\tilde{D}+\tilde{D}\otimes I)$, and
$(D{_L})_{ij}={_L}D^{(\alpha)}_{ij},~(D{_R})_{ij}={_R}D^{(\alpha)}_{ij},~(\tilde{D})_{ij}=D_{ij}$,
$i,j=1,\ldots,N-1$, and $E=I\otimes I$, $I$ is a unit matrix of
order $N-1$, and $\otimes$ stands for Kronecker product.
${_L}D^{(\alpha)},~{_R}D^{(\alpha)}$ and $D$ are given in
(\ref{eq:1.6.0}), (\ref{eq:1.6.1}) and (\ref{eq:1.6.2}),
(\ref{eq:3.2}), respectively.
\section{Numerical Examples}\label{sec:5}
To test the efficiency of the spectral collocation method for the
fractional advection-diffusion equation (\ref{eq:0.1}), several
numerical examples are presented. As it is difficult to find an
analytic solution of fractional differential equations
(\ref{eq:0.1}), in the following examples we assume that some
analytic function $u$ satisfying the given boundary conditions is
the solution of (\ref{eq:0.1}), and the values of the (to be
determined) source function $f$ at collocation points are calculated
by numerical method in our implementation. The key task to compute
the source $f$ is to calculate the Riemann-Liouville fractional
derivatives of order $\alpha~(n-1<\alpha<n)$ of a given function
$v(x)$, and this question can be dealt with by the following
formulae
\begin{equation}
  \begin{split}\label{eq:3.1}
    & _aD_x^{\alpha}v(x)=\sum_{k=0}^{n-1}\frac{v^{(k)}(a)(x-a)^{k-\alpha}}
      {\Gamma(k+1-\alpha)}+\frac{1}{\Gamma(n-\alpha)}
      \int_a^x\frac{v^{(n)}(\xi)}{(x-\xi)^{\alpha-n+1}}\mathrm{d}\xi,\\
    & _xD_b^{\alpha}v(x)=\sum_{k=0}^{n-1}\frac{(-1)^k v^{(k)}(b)(b-x)^{k-\alpha}}
      {\Gamma(k+1-\alpha)}+\frac{(-1)^n}{\Gamma(n-\alpha)}
      \int_x^b\frac{v^{(n)}(\xi)}{(\xi-x)^{\alpha-n+1}}\mathrm{d}\xi,\\
  \end{split}
\end{equation}
the second term of the right hand side of (\ref{eq:3.1}) valued at
collocation points can be determined by numerical integration, and
the Gauss-Lobatto-Jacobi quadrature rule is used in our numerical
experiments. We choose the Crank-Nicolson scheme for the
discretization in the time direction, i.e., $\theta=0.5$, in all the
numerical examples given in this section.
\subsection{Examples for One Dimension}
Four numerical examples in one dimension are given in this section.
In order to verify the accuracy of the space spectral approximation
with less cost, the analytical solution of our problem we select is
a second order polynomial with respect to the time variable $t$, and
the Example \ref{exm:1} is in such case. The numerical results are
measured in the $L^{\infty}$ and $L^2$ norms defined by
\begin{equation*}
  \begin{split}
    & L^{\infty}=\max_{0\le k\le N}\big|u(x_k,\cdot)-u_{_N}(x_k,\cdot)\big|,\\
    & L^2=\Big(\sum_{k=0}^N\big|u(x_k,\cdot)-u_{_N}(x_k,\cdot)\big|^2\omega_k\Big)^{1/2}.
  \end{split}
\end{equation*}
\begin{example}\label{exm:1}
Taking $u(x,t)=(t^2+1)(\cos\pi x+1)$ as the solution of
(\ref{eq:1.1}) with homogeneous boundary conditions since
$u(\pm1,t)=0$. The coefficients in (\ref{eq:1.1}) are chosen as
$\kappa_{\alpha}=\nu_{\alpha}=1,~p=q=0.5$.
\end{example}
\begin{figure}[h]
    \subfigure[$\alpha=1.1$]{
      \begin{minipage}[t]{0.5\linewidth}
        \centering
        \includegraphics[scale=0.4]{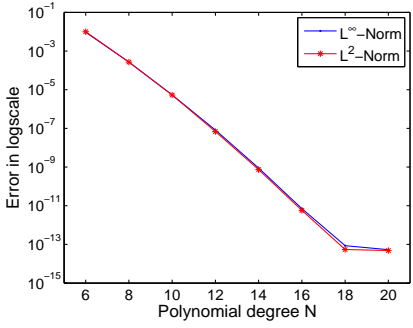}
      \end{minipage}}
    \subfigure[$\alpha=1.3$]{
      \begin{minipage}[t]{0.5\linewidth}
        \centering
        \includegraphics[scale=0.4]{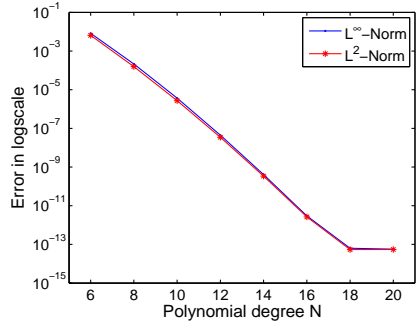}
      \end{minipage}}
    \subfigure[$\alpha=1.7$]{
      \begin{minipage}[t]{0.5\linewidth}
        \centering
        \includegraphics[scale=0.4]{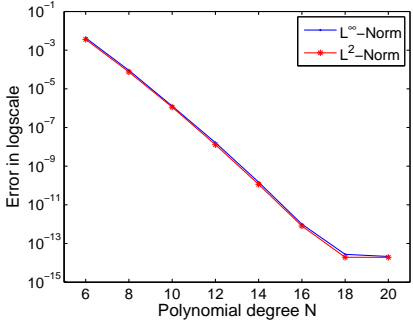}
      \end{minipage}}
    \subfigure[$\alpha=1.9$]{
      \begin{minipage}[t]{0.5\linewidth}
        \centering
        \includegraphics[scale=0.4]{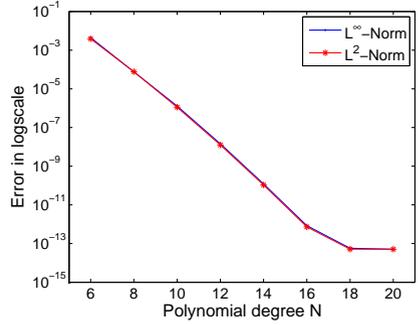}
      \end{minipage}}
    \caption{$L^{\infty}$ and $L^2$ errors to Example \ref{exm:1} approximated on Legendre-Gauss-Lobatto points at $t=1$ for $\alpha=1.1,~1.3,~1.7,~1.9$ with $\tau=0.1$.}\label{fig:a}
\end{figure}
Figure \ref{fig:a} shows the $L^{\infty}$ and $L^2$ errors to Example \ref{exm:1} approximated on Legendre-Gauss-Lobatto points at $t=1$ for $\alpha=1.1,~1.3,~1.7,~1.9$ with $\tau=0.1$, from which we observe that the Legendre spectral collocation method achieves high accuracy for our problem, and the solution converges exponentially.

The $L^{\infty}$ and $L^2$ errors to Example \ref{exm:1}
approximated on Legendre-Gauss-Lobatto and Chebyshev-Gauss-Lobatto
points at $t=1$ for $\alpha=1.5$ with $\tau=0.1$ are presented in
Table \ref{tab:a}, from which we see that the two kinds of
collocation methods have almost the same efficiency. Figure
\ref{fig:b} shows the eigenvalue distribution of the iterative
matrix of the full-discrete scheme to Example \ref{exm:1} for $\alpha=1.5$ with $\tau=0.1$ and $N=6,~12,~18,~24$, the information the figure tells us is that the method we use for (\ref{eq:1.1}) is stable in the numerical trial.
\begin{table}[h]\fontsize{9pt}{12pt}\selectfont
  \begin{center}
    \caption{The $L^{\infty}$ and $L^2$ errors to Example \ref{exm:1} approximated on Legendre-Gauss-Lobatto and Chebyshev-Gauss-Lobatto points at $t=1$ for $\alpha=1.5$ with $\tau=0.1$.}\vspace{5pt}
    \begin{tabular*}{\linewidth}{@{\extracolsep{\fill}}*{2}{r}*{2}{l}*{3}{l}} \midrule
         &  & \multicolumn{2}{l}{Legendre collocation method} & \multicolumn{2}{l}{Chebyshev collocation method} & \\\midrule
         & $N$ & $L^{\infty}$ & $L^2$     & $L^{\infty}$ & $L^2$ & \\\toprule
         & 6 & 4.69749E-03 & 4.00738E-03  & 7.10790E-03 & 6.74624E-03 & \\
         & 8 & 9.66278E-05 & 8.76620E-05  & 1.60074E-04 & 1.49439E-04 & \\
         & 10 & 1.67131E-06 & 1.41412E-06  & 2.17903E-06 & 2.34726E-06 & \\
         & 12 & 1.97558E-08 & 1.71243E-08  & 2.66105E-08 & 2.75979E-08 & \\
         & 14 & 1.86275E-10 & 1.60217E-10  & 2.55300E-10 & 2.51841E-10 & \\
         & 16 & 1.46372E-12 & 1.19140E-12  & 1.79889E-12 & 1.83487E-12 & \\
         & 18 & 1.46549E-14 & 9.88058E-15  & 1.11022E-14 & 1.15594E-14 & \\
         & 20 & 4.66294E-15 & 3.74558E-15  & 5.77316E-15 & 5.14799E-15 & \\
    \midrule
    \end{tabular*}\label{tab:a}
  \end{center}
\end{table}
\begin{figure}[h]
    \subfigure[Legendre collocation method]{
      \begin{minipage}[t]{0.5\linewidth}
        \centering
        \includegraphics[scale=0.4]{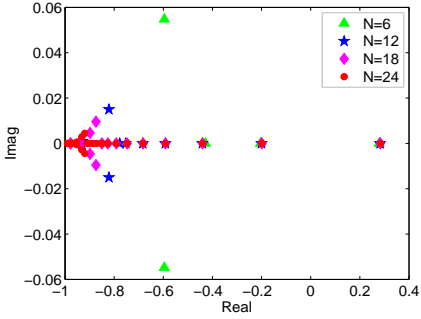}
      \end{minipage}}
    \subfigure[Chebyshev collocation method]{
      \begin{minipage}[t]{0.5\linewidth}
        \centering
        \includegraphics[scale=0.4]{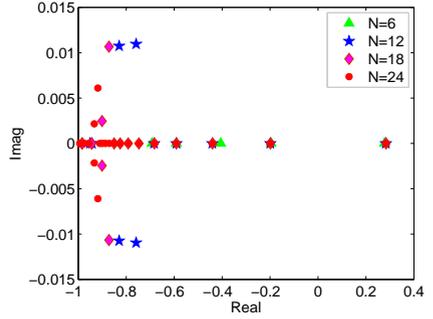}
      \end{minipage}}
    \caption{The eigenvalue distribution of the iterative matrix of the full-discrete scheme of Example \ref{exm:1} for $\alpha=1.5$ with $\tau=0.1$.}\label{fig:b}
\end{figure}

Next we give an example with the mixed boundary conditions.
\begin{example}\label{exm:2}
  Taking $u(x,t)=\exp(\frac{t}{2})\sin x$, $\kappa_{\alpha}=\nu_{\alpha}=1,~p=q=0.5$, and it is a solution of (\ref{eq:1.1}) with the following boundary conditions.
  \begin{align*}
      & u(-1,t)-\frac{\partial u(-1,t)}{\partial x}
        =\exp(\frac{t}{2})(\cos(1)-\sin(1)),\quad t>0,&\\
      & u(1,t)+\frac{\partial u(1,t)}{\partial x}
        =\exp(\frac{t}{2})(\cos(1)+\sin(1)), ~~\qquad t>0.&
  \end{align*}
\end{example}
In Figure \ref{fig:c}, we plot the $L^{\infty}$ and $L^2$ errors to
Example \ref{exm:2} approximated on Legendre-Gauss-Lobatto and
Chebyshev-Gauss-Lobatto points at $t=1$ for
$\alpha=1.1,~1.3,~1.5,~1.7,~1.9$ with $\tau=1/10000$.
\begin{figure}[h!]\centering
    \subfigure[$\alpha=1.1$]{
      \begin{minipage}[t]{0.3\linewidth}
        \centering
        \includegraphics[scale=0.28]{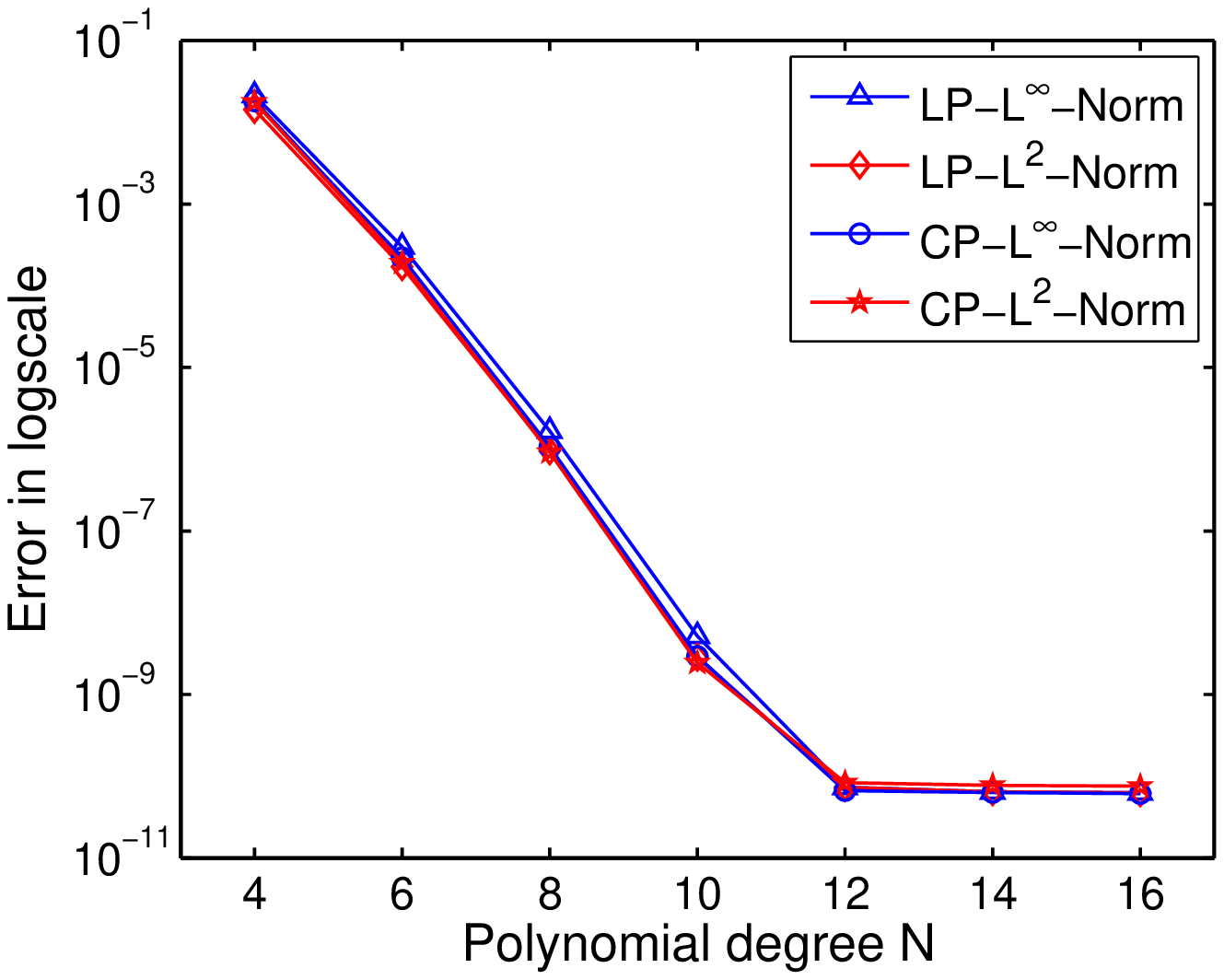}
      \end{minipage}}\hspace{10pt}
    \subfigure[$\alpha=1.3$]{
      \begin{minipage}[t]{0.3\linewidth}
        \centering
        \includegraphics[scale=0.28]{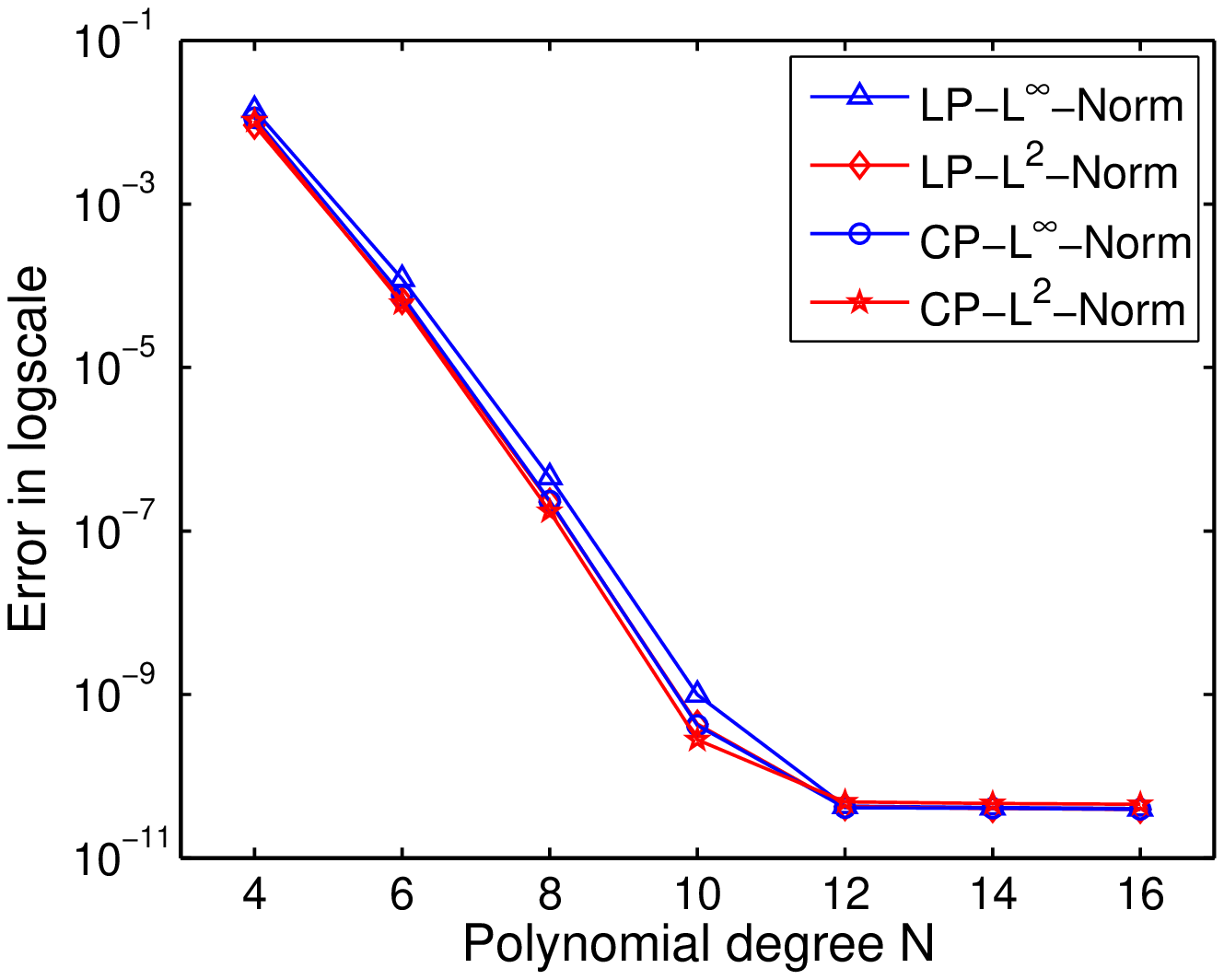}
      \end{minipage}}\hspace{10pt}
    \subfigure[$\alpha=1.5$]{
      \begin{minipage}[t]{0.3\linewidth}
        \centering
        \includegraphics[scale=0.28]{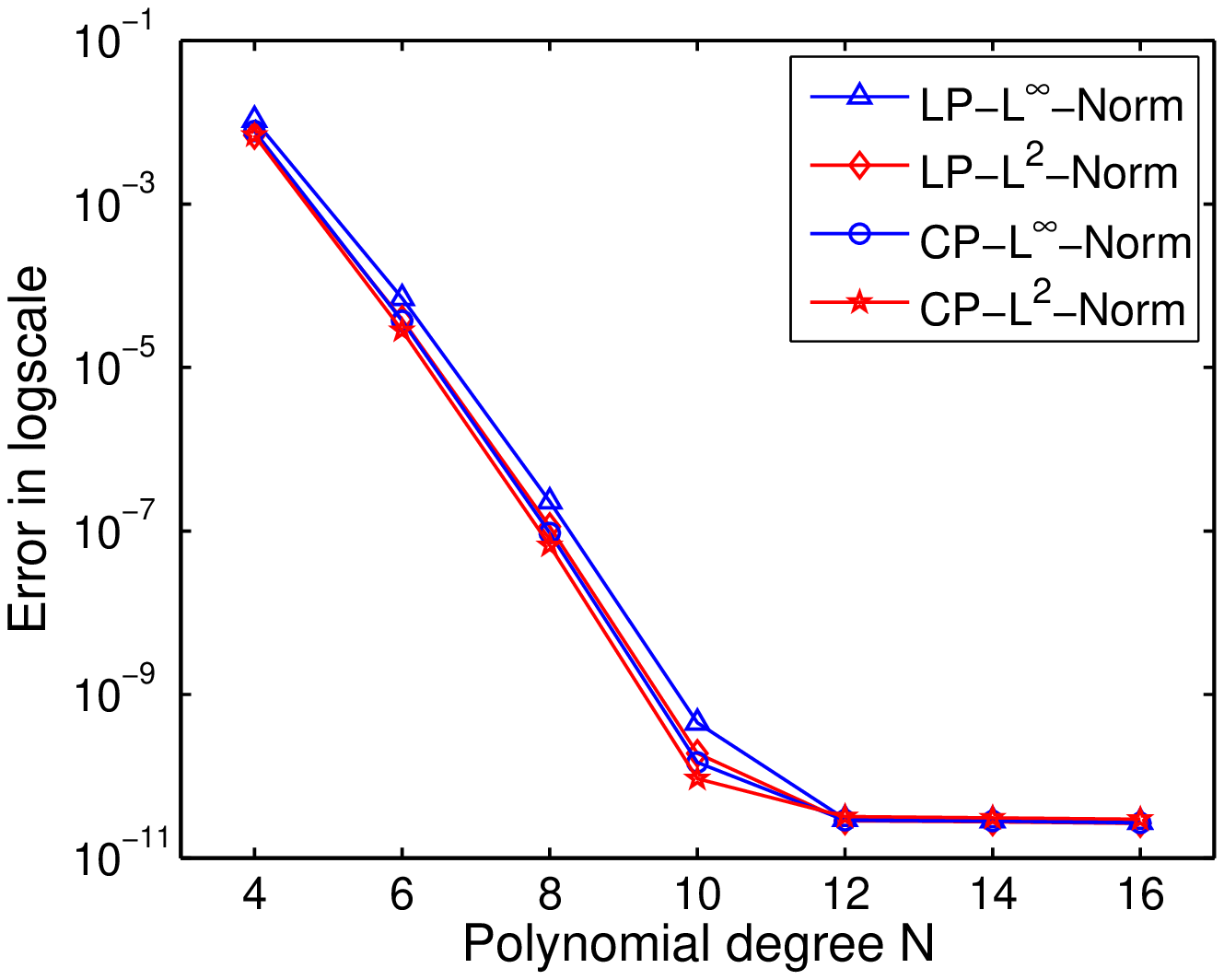}
      \end{minipage}}
    \subfigure[$\alpha=1.7$]{
      \begin{minipage}[t]{0.3\linewidth}
        \centering
        \includegraphics[scale=0.28]{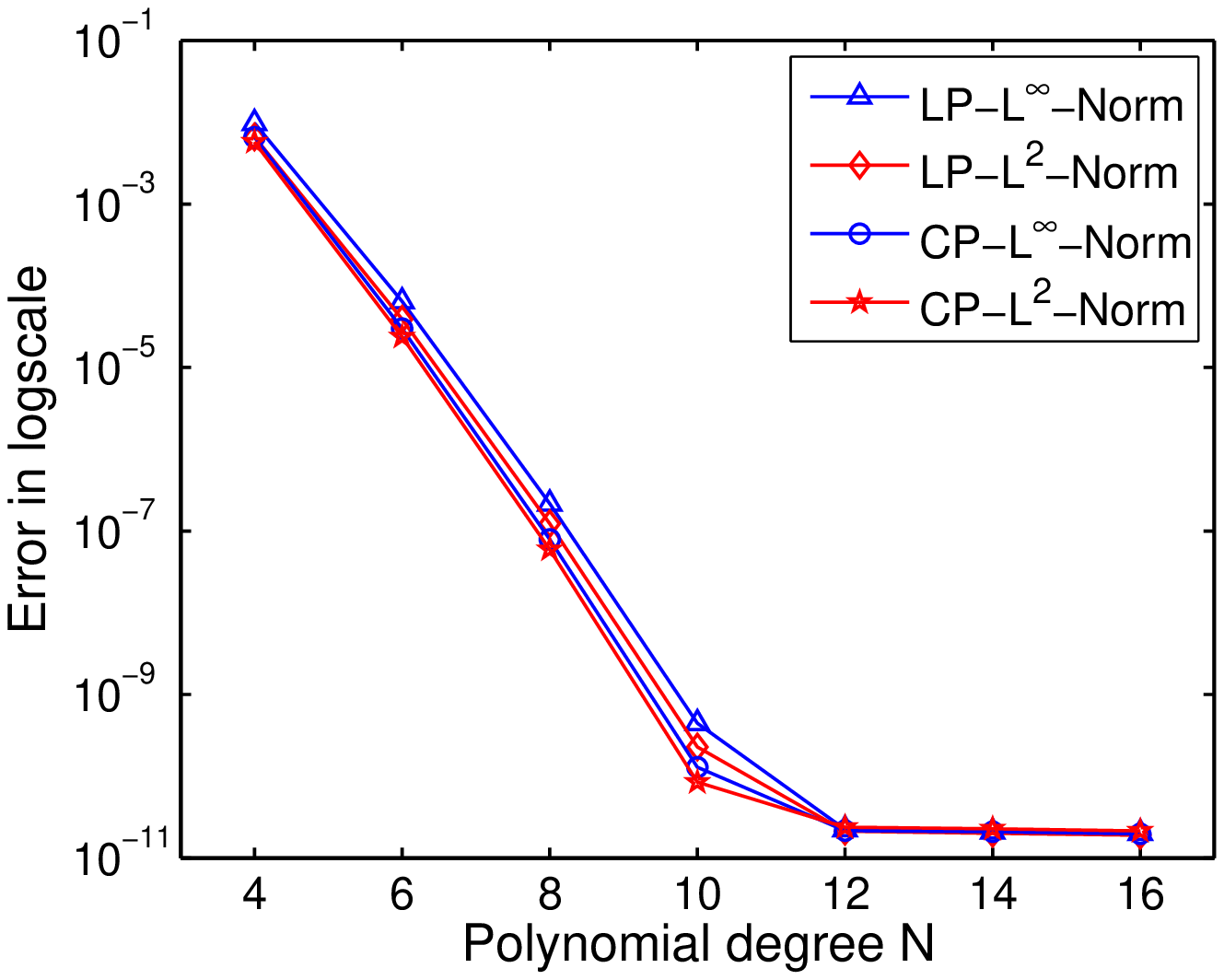}
      \end{minipage}}\hspace{20pt}
    \subfigure[$\alpha=1.9$]{
      \begin{minipage}[t]{0.3\linewidth}
        \centering
        \includegraphics[scale=0.28]{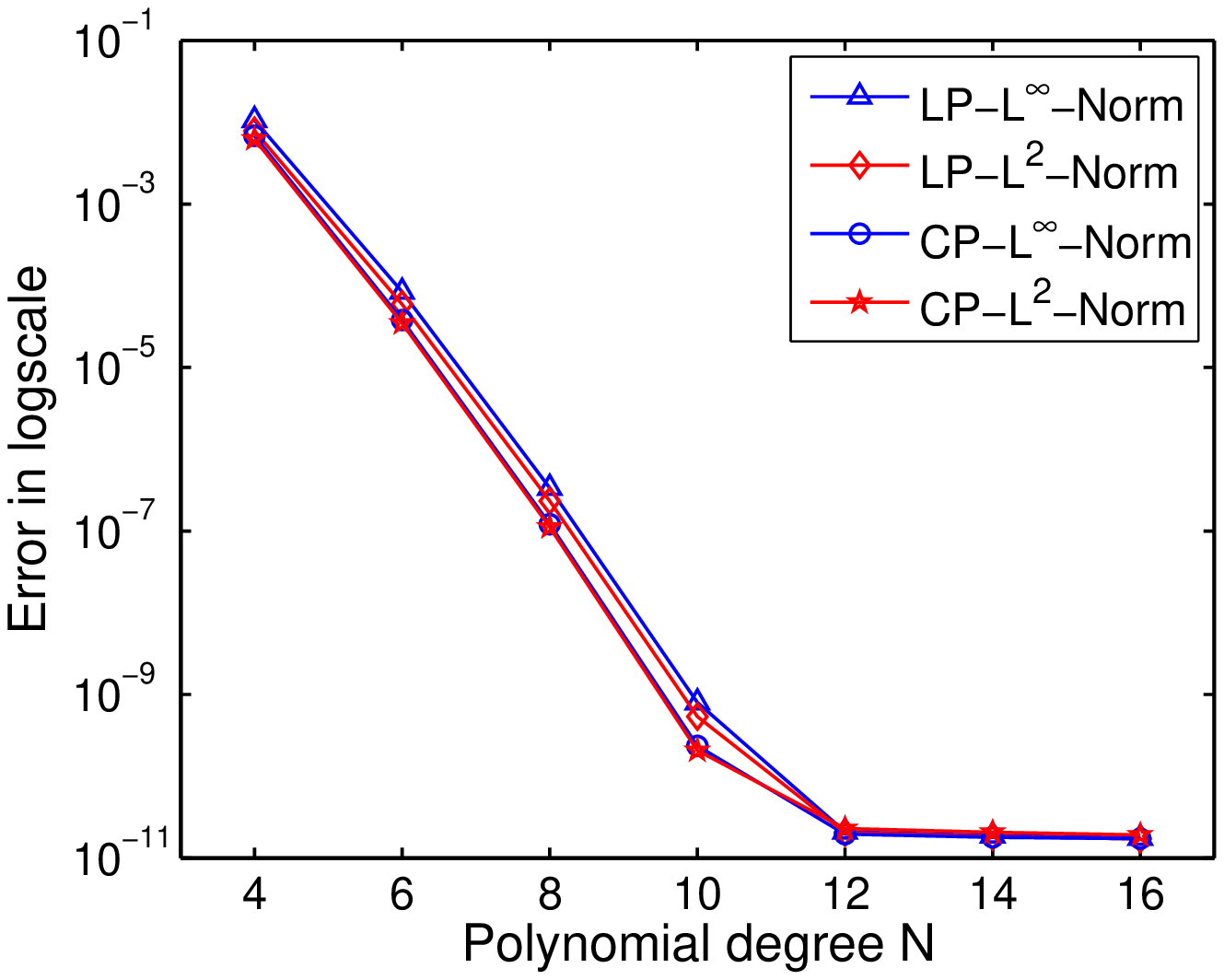}
      \end{minipage}}
    \caption{$L^{\infty}$ and $L^2$ errors to Example \ref{exm:2} approximated on Legendre-Gauss-Lobatto and Chebyshev-Gauss-Lobatto points at $t=1$ for $\alpha=1.1,~1.3,~1.5,~1.7,~1.9$ with $\tau=1/10000$ (LP: Legendre points, CP: Chebyshev points).}\label{fig:c}
\end{figure}
\begin{example}\label{exm:4}
  We approximate the L\'evy-Feller advection-diffusion equation \cite{Liu:07}
  \begin{align*}
      & \frac{\partial u(x,t)}{\partial t}
        =\nabla^{\alpha}u(x,t)-\nabla u(x,t),\quad && -1<x<1,~t>0,\\
      & u(-1,t)=u(1,t)=0,\quad && t>0,\\
      & u(x,0)=\sin(\frac{\pi(x+1)}{2}),\quad && -1\le x\le1,
  \end{align*}
  where $\nabla^{\alpha}u=p\ _{-1}D_x^{\alpha}u+q\ _xD_1^{\alpha}u$, and
  \begin{equation*}
    p=-\frac{\sin((\alpha-\vartheta)\pi/2)}{\sin(\alpha\pi)},\quad
    q=-\frac{\sin((\alpha+\vartheta)\pi/2)}{\sin(\alpha\pi)},\quad |\vartheta|<2-\alpha.
  \end{equation*}
\end{example}
In Figure \ref{fig:f},  (a) shows the numerical solutions
approximated on Legendre-Gauss-Lobatto points for Example
\ref{exm:4} with $\tau=0.1$ at $t=0,~0.2,~0.4,~0.6,~0.8,~1.0$, and
(b) displays the behavior of the solution of the advection-diffusion
process for different $\vartheta$. From Figure \ref{fig:f}(b), we
can see that the parameter $\vartheta$ plays an important role in
the investigation of the non-Fickian transport.
\begin{figure}[h!]
    \subfigure[$\alpha=1.8,~\vartheta=0.1,~N=20$]{
      \begin{minipage}[t]{0.5\linewidth}
        \centering
        \includegraphics[scale=0.4]{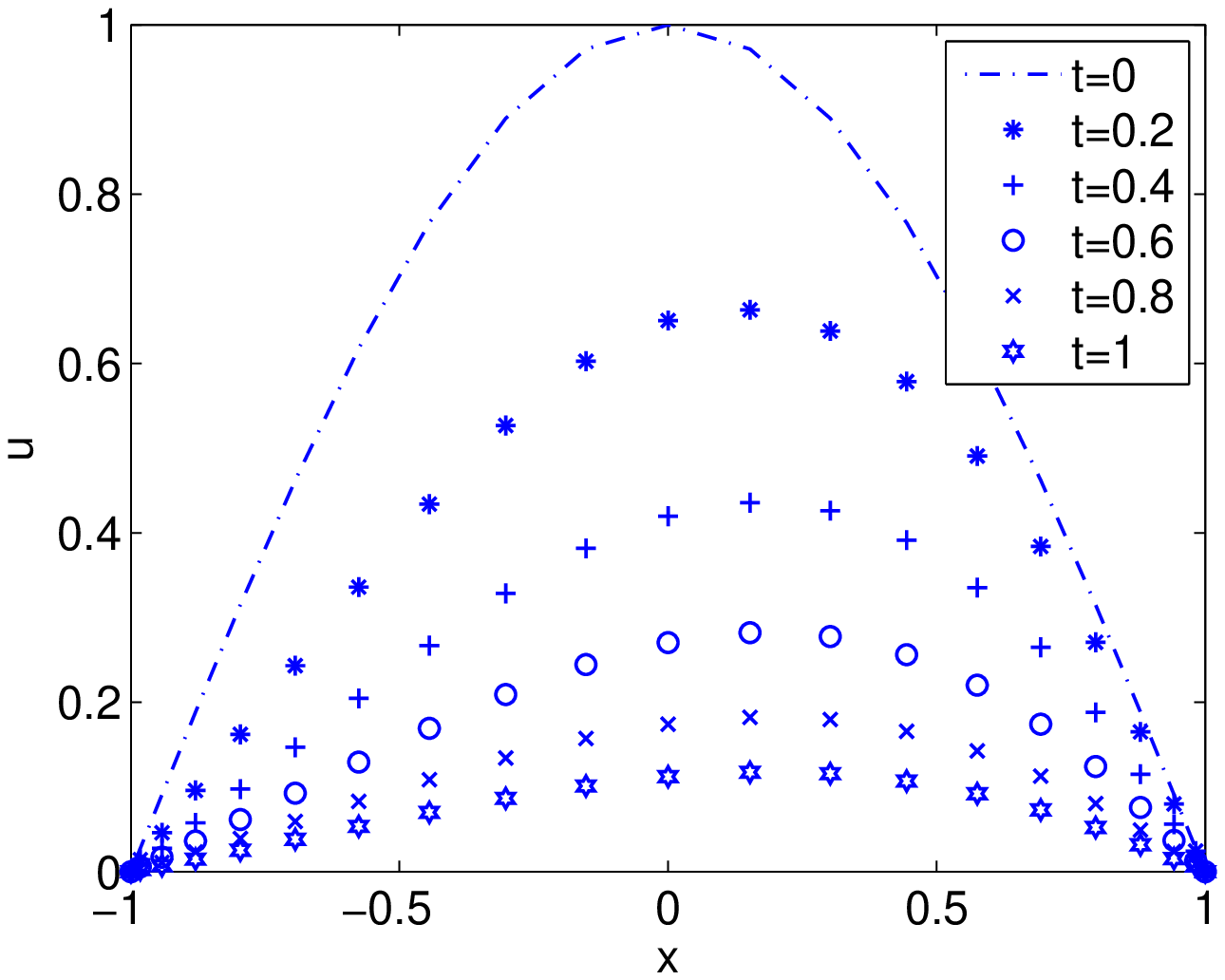}
      \end{minipage}}
    \subfigure[$\alpha=1.6,~t=1,~N=20$]{
      \begin{minipage}[t]{0.5\linewidth}
        \centering
        \includegraphics[scale=0.4]{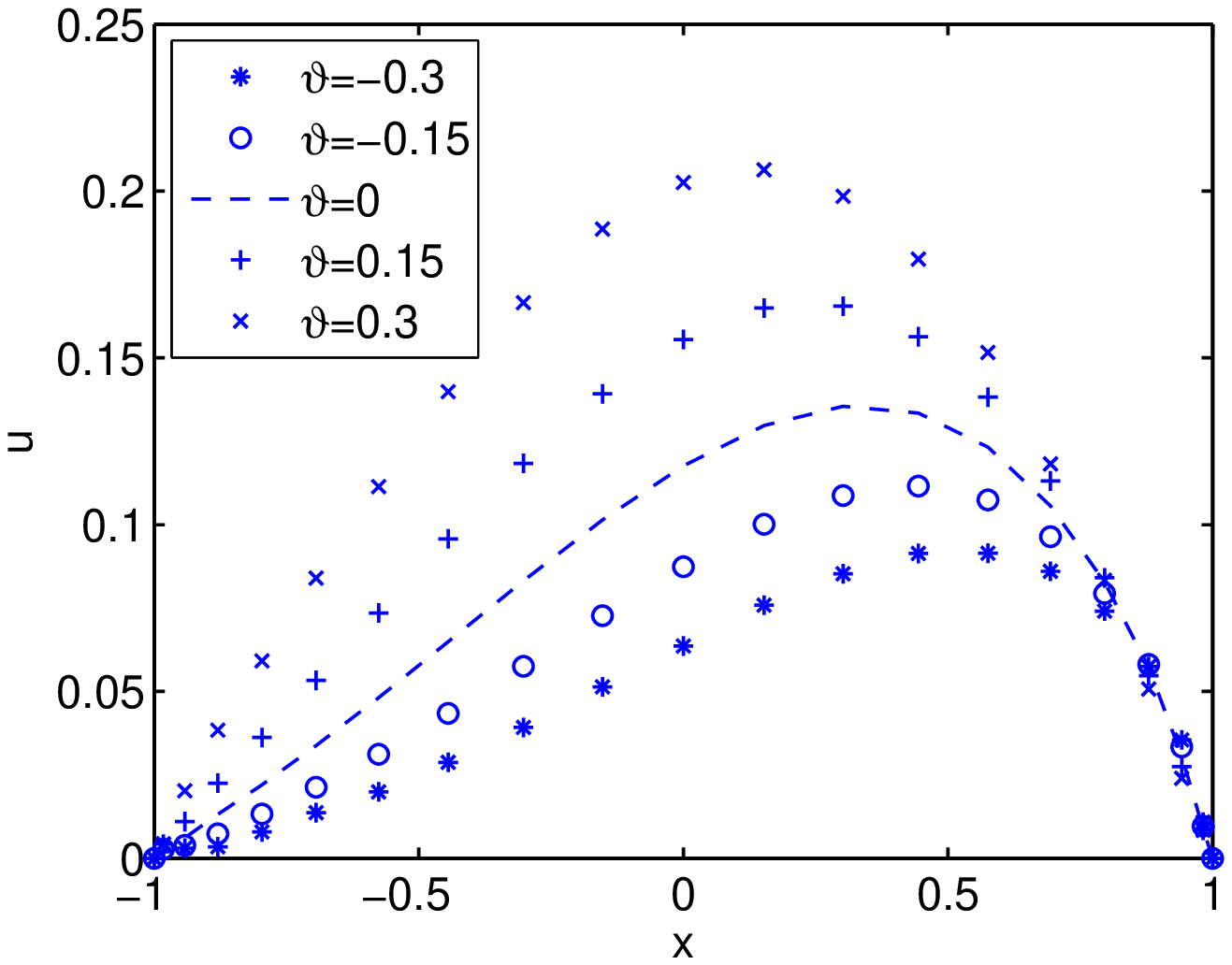}
      \end{minipage}}
    \caption{Approximations on Legendre-Gauss-Lobatto points for Example \ref{exm:4} with $\tau=0.1$.}\label{fig:f}
\end{figure}
\subsection{Examples for Two Dimensions}
For the two dimensional case, the errors are measured in the
$L^{\infty}$ and $L^2$ norms defined by
\begin{equation*}
  \begin{split}
    & L^{\infty}=\max_{0\le r,s\le N}\big|u(x_r,y_s,\cdot)-u_{_N}(x_r,y_s,\cdot)\big|,\\
    & L^2=\Big(\sum_{r,s=0}^N\big|u(x_r,y_s,\cdot)-u_{_N}(x_r,y_s,\cdot)\big|^2\omega_r\omega_s\Big)^{1/2}.
  \end{split}
\end{equation*}
\begin{example}\label{exm:3}
  Setting $u(x,y,t)=(t^2+1)\exp(x^2+y^2)$, $\kappa_{\alpha}=\nu_{\alpha}=1,~p=q=0.5$, and it is a solution of (\ref{eq:2.1}) with the associated Dirichlet boundary conditions.
\end{example}
In Figure \ref{fig:d}, we plot the $L^{\infty}$ and $L^2$ errors to
Example \ref{exm:3} approximated on Legendre-Gauss-Lobatto and
Chebyshev-Gauss-Lobatto points at $t=1$ for
$\alpha=1.1,~1.3,~1.5,~1.7,~1.9$ with $\tau=0.1$. Figure \ref{fig:e}
displays the eigenvalue distribution of the iterative matrix of the
full-discrete scheme of Example \ref{exm:3} for $\alpha=1.5$ with
$\tau=0.1$ and $N=10,~15,~20$, it is noticed that the imaginary
parts of the eigenvalues of the iterative matrix become smaller with
$\alpha$ being large, and the real parts move towards $-1$.
\begin{figure}[h!]\centering
    \subfigure[$\alpha=1.1$]{
      \begin{minipage}[t]{0.3\linewidth}
        \centering
        \includegraphics[scale=0.28]{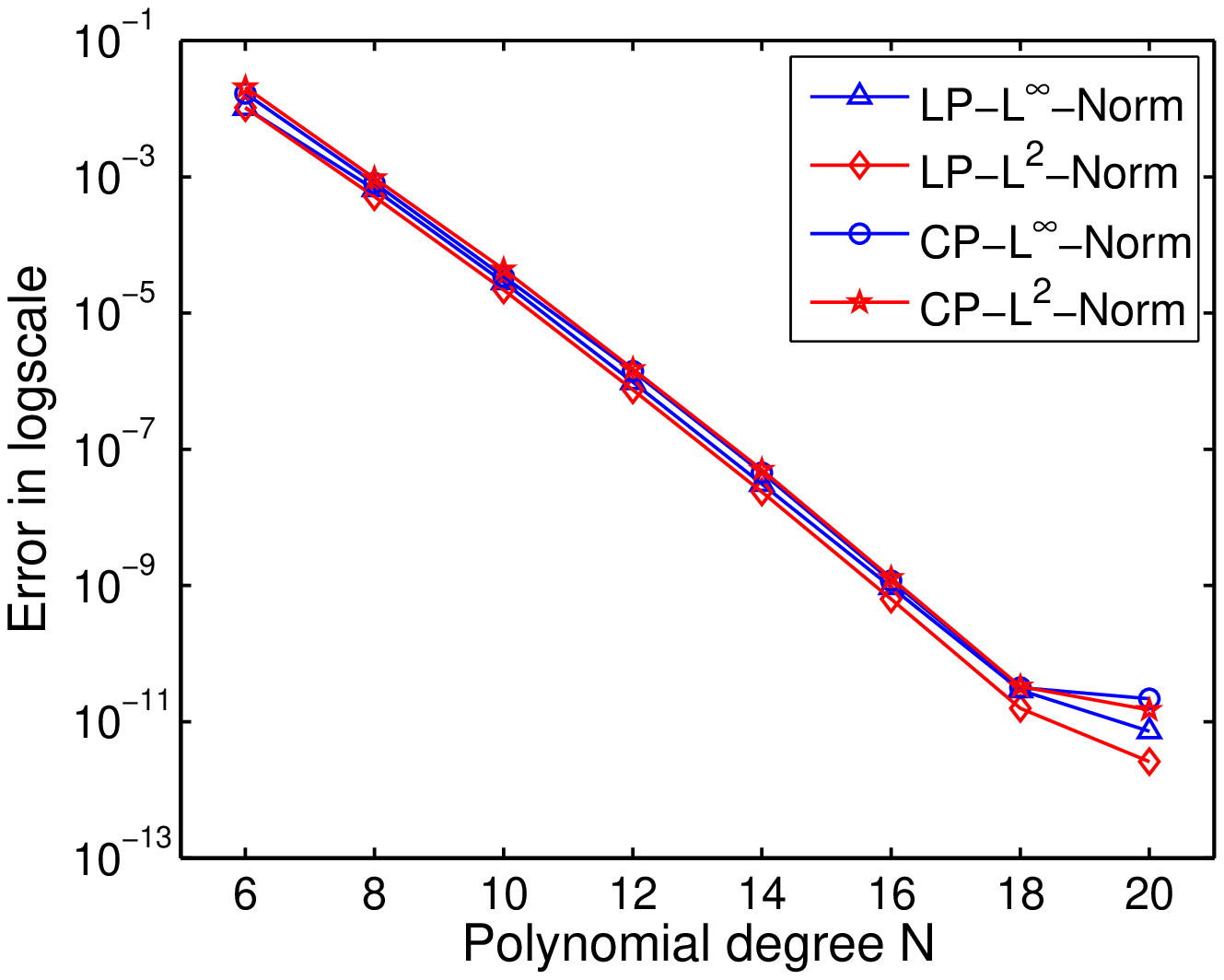}
      \end{minipage}}\hspace{10pt}
    \subfigure[$\alpha=1.3$]{
      \begin{minipage}[t]{0.3\linewidth}
        \centering
        \includegraphics[scale=0.28]{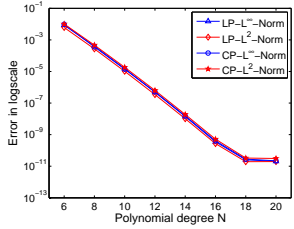}
      \end{minipage}}\hspace{10pt}
    \subfigure[$\alpha=1.5$]{
      \begin{minipage}[t]{0.3\linewidth}
        \centering
        \includegraphics[scale=0.28]{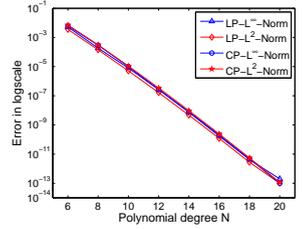}
      \end{minipage}}
    \subfigure[$\alpha=1.7$]{
      \begin{minipage}[t]{0.3\linewidth}
        \centering
        \includegraphics[scale=0.28]{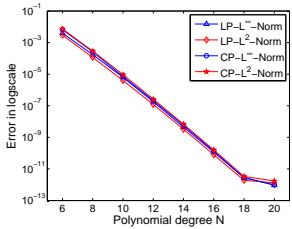}
      \end{minipage}}\hspace{20pt}
    \subfigure[$\alpha=1.9$]{
      \begin{minipage}[t]{0.3\linewidth}
        \centering
        \includegraphics[scale=0.28]{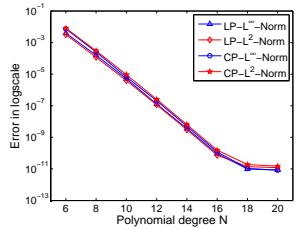}
      \end{minipage}}
    \caption{$L^{\infty}$ and $L^2$ errors to Example \ref{exm:3} approximated on Legendre-Gauss-Lobatto and Chebyshev-Gauss-Lobatto points at $t=1$ for $\alpha=1.1,~1.3,~1.5,~1.7,~1.9$ with $\tau=0.1$ (LP: Legendre points, CP: Chebyshev points).}\label{fig:d}
\end{figure}
\begin{figure}[h!]\centering
    \subfigure[$\alpha=1.2$]{
      \begin{minipage}[t]{0.3\linewidth}
        \centering
        \includegraphics[scale=0.28]{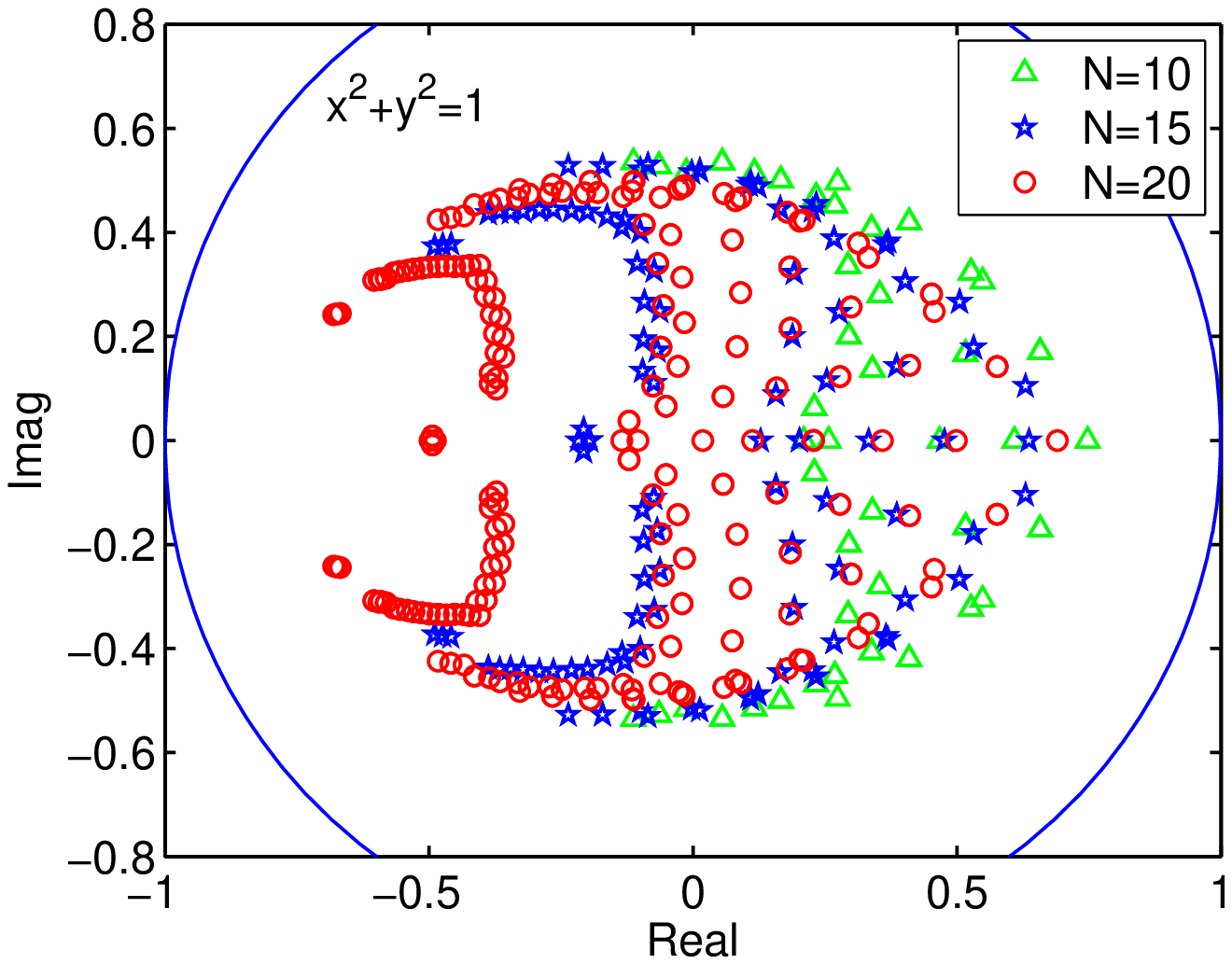}
      \end{minipage}}\hspace{10pt}
    \subfigure[$\alpha=1.5$]{
      \begin{minipage}[t]{0.3\linewidth}
        \centering
        \includegraphics[scale=0.28]{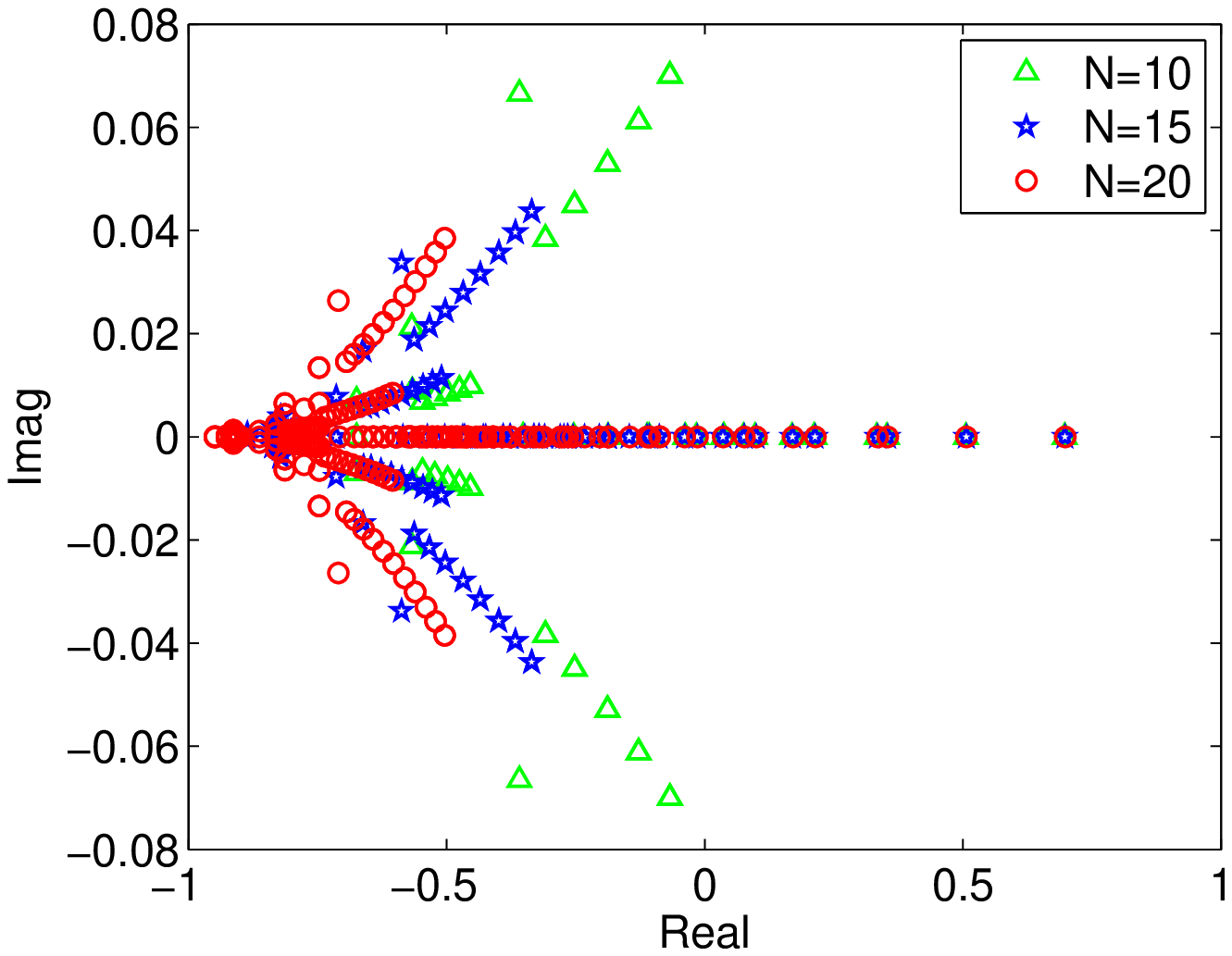}
      \end{minipage}}\hspace{10pt}
    \subfigure[$\alpha=1.8$]{
      \begin{minipage}[t]{0.3\linewidth}
        \centering
        \includegraphics[scale=0.28]{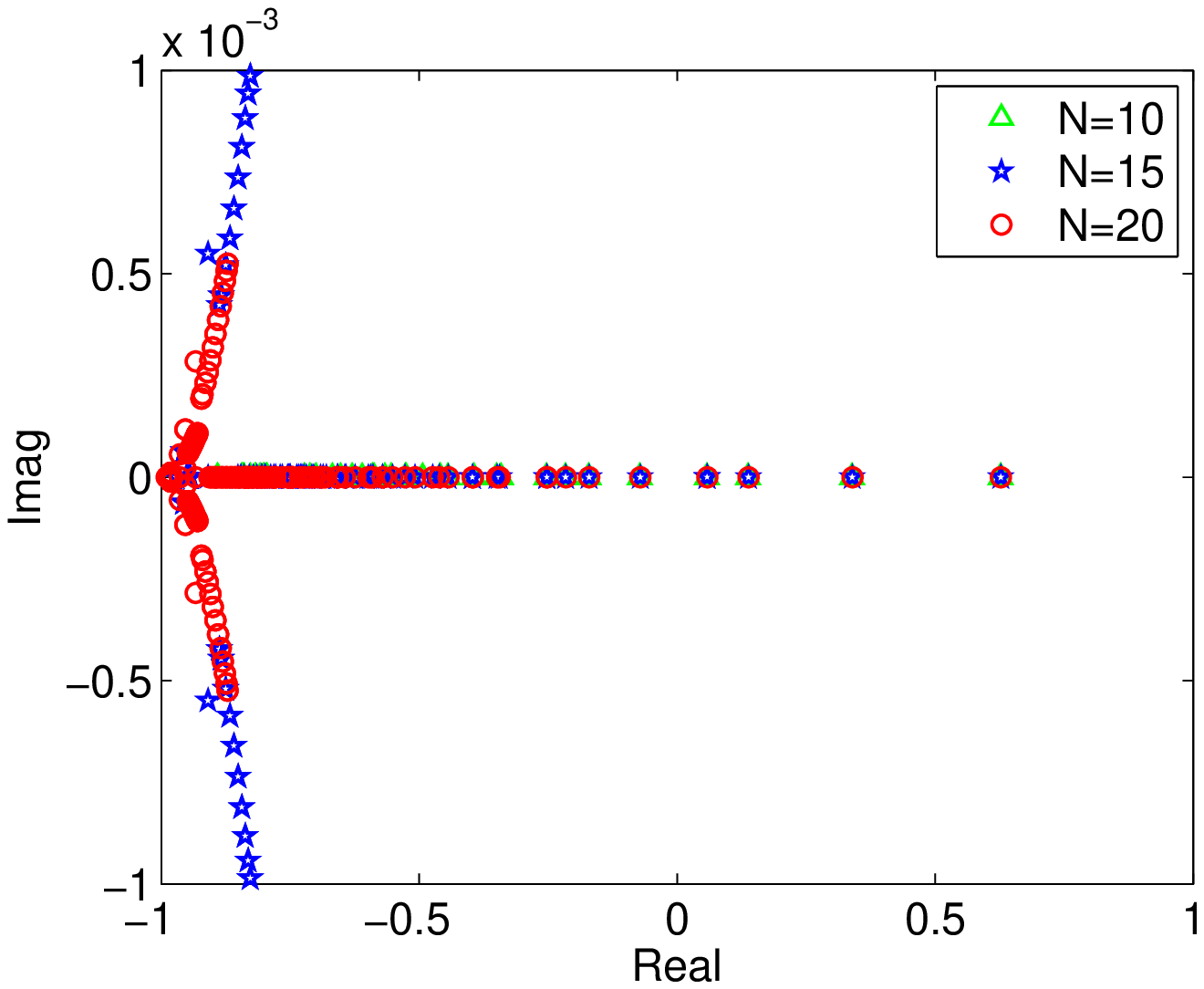}
      \end{minipage}}
    \caption{The eigenvalue distribution of the iterative matrix of the full-discrete scheme of Example \ref{exm:3} approximated on Legendre-Gauss-Lobatto points for $\alpha=1.2,~1.5,~1.8$ with $\tau=0.1$.}\label{fig:e}
\end{figure}
\subsection{Examples for Nonlinear case}
\begin{example}\label{exm:5}
  Setting $u(x,y,t)=(t^2+1)\exp(x^2+y^2)$, $\kappa_{\alpha}=\nu_{\alpha}=1,~p=q=0.5,~m=2$, and it is a solution of (\ref{eq:4.1}) with the corresponding Dirichlet boundary conditions.
\end{example}
\begin{table}[htp]\fontsize{9pt}{12pt}\selectfont
  \begin{center}
  \caption{The $L^{\infty}$ and $L^2$ errors to Example \ref{exm:5} collocated on Legendre-Gauss-Lobatto points at $t=1$ for different $\alpha$ with $\tau=0.1$.}\vspace{5pt}
  \begin{tabular*}{\linewidth}{@{\extracolsep{\fill}}*{1}{l}*{1}{r}*{2}{l}*{1}{r}*{2}{l}}
    \midrule
  $\alpha$ & $N$ & $L^{\infty}$ & $L^2$ & $N$ & $L^{\infty}$ & $L^2$\\
    \toprule
  1.1 &  6 & 1.446E-02 & 1.219E-02 & 14 & 2.707E-07 & 2.415E-07 \\
      &  8 & 1.249E-03 & 1.012E-03 & 16 & 1.309E-08 & 1.195E-08 \\
      & 10 & 8.002E-05 & 7.133E-05 & 18 & 8.331E-10 & 7.543E-10 \\
      & 12 & 5.023E-06 & 4.400E-06 & 20 & 4.800E-09 & 4.962E-09 \\
    \midrule
  $\alpha$ & $N$ & $L^{\infty}$ & $L^2$ & $N$ & $L^{\infty}$ & $L^2$\\
    \toprule
  1.3 &  6 & 2.135E-02 & 2.027E-02 & 14 & 3.403E-07 & 3.590E-07 \\
      &  8 & 1.554E-03 & 1.629E-03 & 16 & 1.759E-08 & 1.829E-08 \\
      & 10 & 1.106E-04 & 1.110E-04 & 18 & 3.095E-09 & 4.351E-09 \\
      & 12 & 6.298E-06 & 6.665E-06 & 20 & 3.232E-09 & 5.049E-09 \\
    \midrule
  $\alpha$ & $N$ & $L^{\infty}$ & $L^2$ & $N$ & $L^{\infty}$ & $L^2$\\
    \toprule
  1.5 &  6 & 2.494E-02 & 2.469E-02 & 16 & 1.962E-08 & 2.043E-08 \\
      &  8 & 1.961E-03 & 1.984E-03 & 18 & 8.622E-10 & 9.002E-10 \\
      & 10 & 1.322E-04 & 1.338E-04 & 20 & 3.496E-11 & 3.649E-11 \\
      & 12 & 7.697E-06 & 7.947E-06 & 22 & 1.284E-12 & 1.360E-12 \\
      & 14 & 4.087E-07 & 4.234E-07 & 24 & 7.194E-14 & 5.820E-14 \\
    \midrule
  $\alpha$ & $N$ & $L^{\infty}$ & $L^2$ & $N$ & $L^{\infty}$ & $L^2$\\
    \toprule
  1.7 &  6 & 2.689E-02 & 2.790E-02 & 14 & 4.536E-07 & 4.645E-07 \\
      &  8 & 2.203E-03 & 2.231E-03 & 16 & 2.200E-08 & 2.225E-08 \\
      & 10 & 1.471E-04 & 1.492E-04 & 18 & 8.563E-10 & 9.453E-10 \\
      & 12 & 8.725E-06 & 8.785E-06 & 20 & 1.263E-10 & 1.260E-10 \\
    \midrule
  $\alpha$ & $N$ & $L^{\infty}$ & $L^2$ & $N$ & $L^{\infty}$ & $L^2$\\
    \toprule
  1.9 &  6 & 2.523E-02 & 3.064E-02 & 14 & 4.357E-07 & 4.812E-07 \\
      &  8 & 2.103E-03 & 2.354E-03 & 16 & 2.255E-08 & 2.343E-08 \\
      & 10 & 1.436E-04 & 1.559E-04 & 18 & 2.996E-09 & 2.889E-09 \\
      & 12 & 8.405E-06 & 9.129E-06 & 20 & 1.208E-09 & 1.297E-09 \\
    \midrule
  \end{tabular*}\label{tab:b}
  \end{center}
\end{table}
The numerical results for Example \ref{exm:5} are summarized in
Table \ref{tab:b}, which show that the spectral collocation method
is also applicable for the nonlinear fractional advection-diffusion
equations.

\section{Conclusion}
The differentiation matrix plays a crucial role in the spectral collocation method, especially for fractional differential equations. We have derived the differentiation matrixes of the left and right Riemann-Liouville and Caputo fractional derivatives and successfully applied the spectral collocation method to handle the numerical solution of the fractional advection-diffusion equation. The stabilities of the semi-discrete and full-discrete scheme in one dimension are theoretically established for the Legendre spectral collocation method. The eigenvalue distributions of the iterative matrix for a variety of systems are computed for further confirming the stability of the numerical schemes in more general cases. The numerical results have shown the efficiency of the proposed methods, moreover, it can be noted that the numerical solution converges exponentially when the exact solution is analytic. The performance of the spectral collocation method for nonlinear fractional advection-diffusion equation is also exhibited. By simulations the interesting physical phenomena for L\'evy-Feller advection-diffusion equation are discovered.
\section*{Acknowledgments}
This work was supported by the Program for New Century Excellent
Talents in University under Grant No. NCET-09-0438, the National
Natural Science Foundation of China under Grant No. 10801067 and No. 11271173, and
the Fundamental Research Funds for the Central Universities under
Grant No. lzujbky-2010-63 and No. lzujbky-2012-k26. WYT thanks for
the help from Li Can, and WHD thanks Chi-Wang Shu for the
inspirations.
\addcontentsline{toc}{section}{References}

\end{document}